\documentclass[a4paper, 10pt]{article} 
\usepackage[preprint]{optional}
\usepackage[margin=3.5cm]{geometry}

\usepackage{amsmath,amssymb}
\opt{ieee}{
\interdisplaylinepenalty=2500
}
\usepackage{amsthm,enumerate,verbatim}
\usepackage{amsfonts}
\usepackage{graphicx}
\usepackage{algorithm}
\usepackage{algorithmic}
\usepackage{url}
\usepackage{paralist}

\opt{preprint}{
\usepackage{lmodern}
}

\opt{mydraft}{
\usepackage{refcheck}
\usepackage{pdfsync}
}

\graphicspath{{figures/}}

\newtheorem{lemma}{Lemma}

\newtheorem{theorem}{Theorem}

\newtheorem{remark}{Remark}

\DeclareMathOperator{\trace}{trace}

\DeclareMathOperator{\argmin}{argmin}
\DeclareMathOperator{\rank}{rank}
\DeclareMathOperator{\diag}{diag}

% How norms as type set
\newcommand{\norm}[1]{\lVert #1 \rVert}

% Basic number sets
\newcommand{\R}{\ensuremath{\mathbb R}}
\newcommand{\Rnn}{\ensuremath{\mathbb R}_+}

% ':=' defined by symbol
\newcommand{\defby}{\mathrel{\mathop:}=}

% '=:' defines symbol
\newcommand{\bydef}{=\mathrel{\mathop:}}

% Vertical bar to symbolize a condition of the members of a set
\newcommand{\setcond}{\ensuremath{\mid}}

% Vertical bar to symbolize the restriction of a function
\newcommand{\restrict}{\ensuremath{|}}

% Big-O
\newcommand{\bigo}[1]{\ensuremath{\mathcal{O}\left(#1\right)}}

% Cardinality operator
\providecommand{\card}[1]{\ensuremath{\lvert#1\rvert}}

\newcommand{\email}[1]{{\ttfamily #1}}

\graphicspath{{figures/}}

\newcommand{\keywordstring}{Nonnegative matrix factorization,
separability, sparse regression, self dictionary, fast gradient,
hyperspectral imaging, pure-pixel assumption.}

\title{A Fast Gradient Method for Nonnegative Sparse Regression with Self Dictionary} %and Separable Nonnegative Matrix Factorization}

\date{}

\author{
Nicolas Gillis\thanks{Department of Mathematics and Operational Research,
    Facult\'e Polytechnique, Universit\'e de Mons,
    Rue de Houdain 9, 7000 Mons, Belgium,
    (\email{nicolas.gillis@umons.ac.be}). NG acknowledges the support of the F.R.S.-FNRS (incentive grant for scientific research n$^\text{o}$ F.4501.16) and of the ERC (starting grant n$^\text{o}$ 679515). }\opt{ieee}{,}
\opt{preprint}{\and}
Robert Luce\thanks{\'Ecole Polytechnique F\'ed\'erale de Lausanne,
    Station 8, 1015 Lausanne, Switzerland
    (\email{robert.luce@epfl.ch})}
}
\begin{document}

\maketitle

\begin{abstract}

A nonnegative matrix factorization (NMF) can be computed efficiently
under the \emph{separability assumption}, which asserts that all the
columns of the given input data matrix belong to the cone generated by
a (small) subset of them.  The provably most robust methods to
identify these \emph{conic basis columns} are based on nonnegative
sparse regression and self dictionaries, and require the solution of
large-scale convex optimization problems.  In this paper we study a
particular nonnegative sparse regression model with self dictionary.
As opposed to previously proposed models, this model yields a smooth
optimization problem where the sparsity is enforced through linear
constraints.  We show that the Euclidean projection on the polyhedron
defined by these constraints can be computed efficiently, and propose
a fast gradient method to solve our model.  We compare our algorithm
with several state-of-the-art methods on synthetic data sets and
real-world hyperspectral images.

\end{abstract}

\opt{preprint}{
\paragraph*{Keywords} \keywordstring
}

\opt{ieee}{
\begin{IEEEkeywords}
\keywordstring
\end{IEEEkeywords}
}

\section{Introduction}
\label{sec:intro}

Given a matrix $M \in \mathbb{R}^{m,n}$ where each column of
$M$ represents a point in a data set, we assume in this paper that each
data point can be well approximated using a nonnegative linear combination
of a small subset of the data points.
More precisely, we assume that there exists a small subset
$\mathcal{K} \subset \{1,2,\dotsc,n\}$ of $r$ column indices and a
nonnegative matrix $H \in \Rnn^{r,n}$ such that
\begin{equation*}
    M \approx M(:,\mathcal{K}) H.
\end{equation*}
If $M$ is nonnegative, this problem is closely
related to nonnegative matrix factorization (NMF) which aims at
decomposing $M$ as the product of two nonnegative matrices $W \in
\mathbb{R}^{m,r}_+$ and $H \in \mathbb{R}^{r,n}_+$ with
$r \ll \min(m,n)$ such that $M \approx WH$~\cite{LS99}.
In the NMF literature, the assumption above is referred to as the
\emph{separability assumption}~\cite{AGKM11}, and the aim is therefore
to finding a particular NMF with $W = M(:,\mathcal{K})$.

There are several applications to solving near-separable NMF, e.g.,
blind hyperspectral unmixing~\cite{Jose12, Ma14},
topic modeling and document classification~\cite{KSK12, Ar13},
video summarization and image classification~\cite{ESV12},
and blind source separation~\cite{CMCW08, CMCW11, LHKL16}.

\subsection{Self Dictionary and Sparse Regression based Approaches}

Many algorithms have been proposed recently
to solve the near-separable NMF problem;
see~\cite{G14a}
and the references therein. In hyperspectral unmixing,
the most widely used algorithms sequentially identify important columns of $M$, such as vertex component analysis (VCA)~\cite{ND05} or the successive projection algorithm (SPA)~\cite{MC01, GV14};
see~section~\ref{sec:exp} for more details.
Another important class of algorithms for
identifying a good subset $\mathcal{K}$ of the columns of $M$
is based on sparse regression and self dictionaries. These algorithms are computationally more expensive but
have the advantage to consider the selection of the indices in $\mathcal{K}$ at once leading to the most robust algorithms;
see the discussion~\cite{G14a}.

An exact model for nonnegative sparse regression with self
dictionary~\cite{EMO12, ESV12} is
\begin{equation}
\label{eq:sreg}
\min_{X \in \Rnn^{n,n}} \; \norm{X}_{\text{row},0} \; \text{such that} \;
        \norm{{M} - {M}X} \leq \epsilon, 
\end{equation}
where $\norm{X}_{\text{row},0}$ equals to the number of nonzero rows
of $X$, and $\epsilon$ denotes the noise level of the data $M$.  Here,
the norm in which the residual $M - MX$ is measured should be chosen
in dependence of the noise model.  It can be checked that there is
a nonnegative matrix $X$ with $r$ nonzero rows such that $M  = MX$ if
and only if there exists an index set $\mathcal{K}$ of cardinality $r$
and a nonnegative matrix $H$ such that $M = M(:,\mathcal{K})H$: The
index set $\mathcal{K}$ corresponds to the indices of the nonzero rows
of $X$, and hence we have $H = X(\mathcal{K},:)$;
see~\cite[sec.~3]{BRRT12} or~\cite[sec.~I-B]{EMO12} for
details. 

In \cite{EMO12, ESV12}, the difficult problem \eqref{eq:sreg} is
relaxed to the convex optimization problem
\begin{equation}
\label{eq:esser}
\min_{X \in \Rnn^{n,n}} \; \norm{X}_{1,q} \; \text{s.t.} \;
        \norm{{M} - {M}X} \leq \epsilon \text{ and } X \leq 1,
\end{equation}
where $\norm{X}_{1,q} \defby \sum_{i = 1}^n \norm{X(i,:)}_{q}$.  In
\cite{EMO12}, $q = +\infty$ is used while, in \cite{ESV12}, $q = 2$ is
used.  The quantity $\norm{X}_{1,q}$ is the $\ell_1$-norm of the
vector containing the $\ell_q$ norms of the rows of $X$.  Because the
$\ell_1$ norm promotes sparsity, this model is expected to generate a
matrix $X$ with only a few nonzero rows.  The reason is that the
$\ell_1$ norm is a good surrogate for the $\ell_0$ norm on the
$\ell_{\infty}$ ball.  In fact, the $\ell_1$ norm is the convex
envelope of the $\ell_0$ norm on the $\ell_{\infty}$ ball, that is,
the $\ell_1$ norm is the largest convex function smaller than the
$\ell_0$ norm on the $\ell_{\infty}$ ball; see~\cite{RFP10}.  Hence
for $q = +\infty$ and $X \leq 1$, we have $\norm{X}_{1,\infty} \leq
\norm{X}_{\text{row},0}$ so that \eqref{eq:esser} provides a lower
bound for~\eqref{eq:sreg}. 
In practice, the constraint $X \leq 1$ is often satisfied; for example, 
in hyperspectral imaging, the entries of $X$ represent abundances 
which are smaller than one. 
If this assumption is not satisfied and the input matrix is nonnegative, 
it can be normalized so that the entries of the columns of matrix $H$ (hence $X$) are at most one, as
suggested for example in~\cite{EMO12}.  This can be achieved by
normalizing each column of $M$ so that its entries sum to one.  After
such a normalization, we have for all $j$
\begin{equation*}
\begin{split}
    1
    & = \norm{M(:,j)}_1
    = \norm{M X(:,j)}_1
    = \norm{\sum_{k} M(:,k) X(k,j)}_1\\
    & = \sum_{k \in \mathcal{K}} X(k,j) \norm{M(:,k)}_1
    = \norm{X(:,j)}_1,
\end{split}
\end{equation*}
since $M$ and $X$ are nonnegative.

The model \eqref{eq:esser} was originally proved to be robust to
noise, but only at the limit, that is, only for $\epsilon \rightarrow
0$, and assuming no columns of $M(:,\mathcal{K})$ are repeated in the
data set \cite{EMO12}.  If a column of $M(:,\mathcal{K})$ is present
twice in the data set, the (convex) models cannot discriminate between
them and might assign a weight on both columns.  (The situation is
worsened in the presence of more (near) duplicates, which
is typical in hyperspectral image data, for example). More recently,
Fu and Ma~\cite{FM16} improved the robustness analysis of the model
for $q = +\infty$ (in the absence of duplicates).

Another sparse regression model proposed in~\cite{BRRT12} and later improved 
in~\cite{GL13} is the following:
\begin{equation}
\label{GLmod}
    \min_{X \in \Rnn^{n,n}} \; \trace(X) \quad \text{s.t.} \quad
    \begin{aligned}
        & \norm{M - MX} \leq \epsilon\\
        & X(i,j) \leq X(i,i) \leq 1 \; \forall i,j.
    \end{aligned}
\end{equation}
(The model can easily be generalized for non-normalized $M$; see
model~\eqref{GLmod2}).
Here sparsity is enforced by minimizing the $\ell_1$ norm of
the diagonal of $X$ as $\trace(X) = \norm{\diag(X)}_1$ for $X \geq 0$,
while no off-diagonal entry of $X$ can be larger than the
diagonal entry in its row.  Hence $\diag(X)$ is sparse if and only
if $X$ is row sparse.

The model~\eqref{GLmod} is, to the best of our knowledge, the provably
most robust for near-separable NMF~\cite{GL13}. %(note that in the  presence of duplicated or near duplicated columns of $W$, an appropriate post-processing needs to be applied to $X$).
In particular, as opposed to most near-separable NMF algorithms that
require $M(:,\mathcal{K})$ to be full column rank, it only requires
the necessary condition that no column of $M(:,\mathcal{K})$ is
contained in the convex hull of the other columns. More precisely, let
us define the \emph{conical robustness} of a matrix $W \in \R^{m,r}$ as
\begin{equation*}
    \kappa = \min_{1 \leq k \leq r} \min_{x \in \Rnn^{r-1}}
        \norm{W(:, k) - W(:, \{1,\dotsc,r\}\setminus \{k\})x}_1.
\end{equation*}
We then say that $W$ is \emph{$\kappa$-robustly conical}, and the
following recovery result can be obtained:
\begin{theorem}[\cite{GL13}, Th.~7]
Let $M = M(:,\mathcal{K}) H$ be a separable matrix with
$M(:,\mathcal{K})$ being $\kappa$-robustly conical and the entries of
each column of $H$ summing to at most one, and let $\tilde{M} = M+N$.
 If $\epsilon \defby \max_{1 \leq j \leq
n} \norm{N(:,j)}_1 \leq \mathcal{O}\left(\frac{\kappa}{r}\right)$, then
the model~\eqref{GLmod} allows to recover the columns of
$M(:,\mathcal{K})$ up to error $\mathcal{O}\left(r
\frac{\epsilon}{\kappa}\right)$.
\end{theorem}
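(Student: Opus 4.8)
The plan is the three‑phase robustness argument typical of convex relaxations of near‑separable NMF (cf.~\cite{BRRT12}): exhibit an oracle feasible point to get an a priori bound on $\trace(X^\star)$ for any optimizer $X^\star$ of~\eqref{GLmod}; turn that bound, together with the fidelity constraint and $\kappa$‑robust conicality, into a concentration statement for $\diag(X^\star)$; and finally extract $\mathcal K$ and bound the reconstruction error. The natural matrix norm is the one induced by the column $\ell_1$‑norm, $\norm{A}=\max_j\norm{A(:,j)}_1$, which is submultiplicative and matches the hypothesis $\epsilon=\max_j\norm{N(:,j)}_1$.

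\emph{Oracle point and trace bound.} Since $\kappa>0$, no column of $M(:,\mathcal K)$ lies in the cone of the others, and isolating $M(:,k)$ in $M(:,k)=M(:,\mathcal K)H(:,k)$ forces $H(:,k)=e_{\ell(k)}$ for each $k\in\mathcal K$, where $\ell(k)$ is the position of $k$ inside $\mathcal K$ (if $H(\ell(k),k)<1$, dividing through exhibits $M(:,k)$ as a nonnegative combination of the remaining columns of $M(:,\mathcal K)$). Hence the matrix $\bar X$ with $\bar X(\mathcal K,:)=H$ and zero rows elsewhere satisfies $\bar X(i,j)\le\bar X(i,i)\le 1$ and $\trace(\bar X)=r$, and its residual $\tilde M-\tilde M\bar X=N(I-\bar X)$ (using $M(:,\mathcal K)H=M$) has norm at most $\epsilon\norm{I-\bar X}\le 2\epsilon$, because the columns of $H$, hence of $\bar X$, have $\ell_1$‑norm at most one. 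So $\bar X$ is feasible for the tolerance $\mathcal O(\epsilon)$, and every optimizer obeys $\trace(X^\star)\le r$.

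\emph{Concentration.} Here I would first reduce to an optimizer whose rows are supported on $\mathcal K$ (equivalently, on a set of columns of $\tilde M$ each $\mathcal O(r\epsilon)$‑close to a column of $M(:,\mathcal K)$), by a redistribution argument: because $M(:,j)=M(:,\mathcal K)H(:,j)$, weight $X^\star(j,c)$ on an off‑support row $j$ can be pushed onto the rows of $\mathcal K$ according to $H(:,j)$, which leaves $MX$ unchanged, perturbs $\tilde M X$ by $\mathcal O(r\epsilon)$, and — since $\norm{H(:,j)}_1\le 1$ — does not increase $\trace(X)$; one must check the diagonal‑dominance constraints can be maintained. For such a solution the column‑$k$ fidelity constraint reads $M(:,k)-M(:,\mathcal K)g_k=\eta_k$ with $g_k:=HX^\star(:,k)\ge 0$ and $\norm{\eta_k}_1=\mathcal O(r\epsilon)$ (the factor $r$ coming from $\trace(X^\star)\le r$, which bounds the column $\ell_1$‑norm of $X^\star$ governing how $N$ propagates through $NX^\star$); splitting off the $\ell(k)$‑th coordinate of $g_k$ and using the definition of $\kappa$ yields $1-(g_k)_{\ell(k)}\le\norm{\eta_k}_1/\kappa=\mathcal O(r\epsilon/\kappa)$, and since the rows of $X^\star$ lie on $\mathcal K$ and $H(:,k)=e_{\ell(k)}$ we get $(g_k)_{\ell(k)}=X^\star(k,k)$, i.e. $X^\star(k,k)\ge 1-\mathcal O(r\epsilon/\kappa)$. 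Combined with $\trace(X^\star)\le r$ this forces the diagonal mass on the remaining rows to be $\mathcal O(r\epsilon/\kappa)$, so under $\epsilon=\mathcal O(\kappa/r)$ the $r$ largest diagonal entries of $X^\star$ are those indexed by $\mathcal K$ (up to near‑duplicate columns, among which any one may be selected).

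\emph{Extraction, and the main obstacle.} Given the concentration, one extracts the $r$ largest diagonal entries (preceded by a clustering of the columns of $\tilde M$ that collapses near‑duplicates, keeping one representative per cluster) and bounds, for each $k\in\mathcal K$, the distance of the selected column to $M(:,k)$: in the duplicate‑free case this is just $\norm{N(:,k)}_1\le\epsilon$, and in general the same estimates give $\mathcal O(r\epsilon/\kappa)$, which is the claim. The genuinely hard step is the concentration: the trace budget and the fidelity constraint by themselves only pin $\trace(X^\star)$ near $r$ and do \emph{not} separate the diagonal mass on $\mathcal K$ from that on the complement — an adversarial optimizer could spread mass over redundant columns of $\tilde M$. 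Overcoming this forces one to exploit optimality of $X^\star$ (via the redistribution/vertex argument above) while simultaneously handling the (near‑)duplicate columns that the convex model provably cannot disambiguate, all with constants kept compatible with the threshold $\epsilon=\mathcal O(\kappa/r)$ so that the two halves of the argument — ``$\diag(X^\star)$ concentrated'' and ``columns recovered'' — do not become circular.
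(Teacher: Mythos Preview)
The paper does not prove this theorem: it is quoted verbatim from~\cite{GL13} (Theorem~7 there) and no argument is given in the present paper. So there is no ``paper's own proof'' to compare your attempt against.

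That said, your outline follows the standard three‑phase template used in~\cite{BRRT12,GL13}: (i)~an oracle feasible point gives $\trace(X^\star)\le r$; (ii)~combining the fidelity bound, the diagonal‑dominance constraints, and $\kappa$‑robust conicality forces $X^\star(k,k)\ge 1-\mathcal O(r\epsilon/\kappa)$ for $k\in\mathcal K$; (iii)~the trace budget then squeezes the remaining diagonal mass and one extracts $\mathcal K$. Your bookkeeping in steps (i) and (iii) is fine, and the chain $\norm{X^\star(:,j)}_1\le\trace(X^\star)\le r\Rightarrow\norm{NX^\star(:,j)}_1\le r\epsilon$ is the right way the factor $r$ enters.

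The real gap is exactly where you flag it: the redistribution step in (ii). You assert that one can push off‑support weight from row $j\notin\mathcal K$ onto rows in $\mathcal K$ according to $H(:,j)$ ``while the diagonal‑dominance constraints can be maintained'', but this is not automatic. Redistributing $X^\star(j,c)$ to $X^\star(k,c)$ for $k\in\mathcal K$ may push $X^\star(k,c)$ above $X^\star(k,k)$, and the obvious fix---simultaneously raising $X^\star(k,k)$---can increase $\trace(X)$, destroying the optimality property you need. In~\cite{GL13} this is handled not by a single redistribution but by a more delicate argument that first clusters the columns of $\tilde M$ (so that each cluster contains at most one ``true'' basis column up to $\mathcal O(\epsilon)$), then works cluster‑by‑cluster to show that within the cluster containing $\tilde M(:,k)$ the aggregate diagonal mass is $\ge 1-\mathcal O(r\epsilon/\kappa)$; only after this aggregation does the $\kappa$‑robustness inequality apply cleanly. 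Your identity $(g_k)_{\ell(k)}=X^\star(k,k)$ presupposes the redistribution has already succeeded, so as written the argument is circular at precisely the point you identify as the obstacle.
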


\subsection{Contribution and Outline of the Paper}

In the work~\cite{GL13} a robustness analysis of the
model~\eqref{GLmod} was given, which we here relate to the robustness
of the model~\eqref{eq:esser}.  We also present a practical and
efficient first order optimization method for~\eqref{GLmod}
(in~\cite{GL13} no such method was given).  More precisely, our
contribution in this work is threefold:

\begin{itemize}

\item In section~\ref{equival}, we prove that both sparse regression
models~\eqref{eq:esser} and~\eqref{GLmod} are equivalent. 
This significantly improves the theoretical
robustness analysis of~\eqref{eq:esser}, as the results
for~\eqref{GLmod} directly apply to~\eqref{eq:esser}.

\item In section~\ref{convFG}, we introduce a new model, very similar
to~\eqref{GLmod} (using the Frobenius norm, and not assuming
normalization of the input data), for which we propose an optimal
first-order method: the key contribution is a very efficient and
non-trivial projection onto the feasible set.  Although our approach
still requires $\mathcal{O}(mn^2)$ operations per iteration, it can
solve larger instances of~\eqref{GLmod} than commercial solvers, with
$n \sim 1000$. We show the effectiveness of our approach on synthetic
data sets in section~\ref{sec:synexp}.

\item In section~\ref{sec:hsiexp}, we preselect a subset of the columns of
the input matrix and scale them appropriately (depending on their
importance in the data set) so that we can apply our method
meaningfully to real-world hyperspectral images when $n \sim 10^6$.
We show that our approach outperforms state-of-the-art pure pixel
search algorithms.

\end{itemize}

\section{Equivalence between Sparse Regression Models \eqref{eq:esser}
and \eqref{GLmod}} \label{equival}

%If we assume that the entries of the matrix $X$ in the
%model~\eqref{eq:esser} are bounded above by one, which is the case for
%example in hyperspectral imaging, or if we normalize the columns on
%the input matrix ${M}$ (see section~\ref{sec:intro}),
%then we can consider the following natural variant of the
%model~\eqref{eq:esser} with an upper bound constraint on $X$:
%\begin{equation}
%\label{esser1}
%\min_{X \in \Rnn^{n,n}, X \leq 1} \; \norm{X}_{1,\infty} \quad \text{s.t.} \quad
%         \norm{M - MX} \leq \epsilon.
%\end{equation}

We now prove the equivalence between the models~\eqref{eq:esser} and~\eqref{GLmod}. 
We believe it is an important result because, as far as we know, both models have 
been treated completely independently in the literature, and, as explained in the Introduction,  
while model~\eqref{eq:esser} is more popular~\cite{EMO12,ESV12,FM16}, 
stronger theoretical guarantees were provided for model~\eqref{GLmod}~\cite{GL13}. 

\begin{theorem}
    \label{thm:equiv}
Let $\norm{\cdot}$ be a column wise matrix norm, that is, $\norm{A} =
\sum_{i} \alpha_i \norm{A(:,i)}_c$ for some $\alpha_i > 0$ and
some vector norm $\norm{\cdot}_c$.  Then~\eqref{eq:esser} is
equivalent to~\eqref{GLmod} in the following sense:
\begin{itemize}
    \item At optimality, both objective functions coincide,
    \item any optimal solution of~\eqref{GLmod} is an optimal solution
        of~\eqref{eq:esser}, and
    \item any optimal solution of~\eqref{eq:esser} can be trivially
        transformed into an optimal solution of~\eqref{GLmod}.
\end{itemize}
\end{theorem}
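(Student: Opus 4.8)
The plan is to exhibit a correspondence between feasible points of the two models that preserves (or does not increase) the objective, in both directions. Observe first that both models share the residual constraint $\norm{M - MX} \leq \epsilon$ and the upper bound $X \leq 1$ in some form, so the only structural difference is how sparsity is encoded: in~\eqref{eq:esser} by the entrywise bound $X \leq 1$ together with the objective $\norm{X}_{1,q} = \sum_i \norm{X(i,:)}_q$, and in~\eqref{GLmod} by the coupling constraints $X(i,j) \leq X(i,i) \leq 1$ together with the objective $\trace(X) = \sum_i X(i,i)$.

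\emph{From~\eqref{GLmod} to~\eqref{eq:esser}.} I would first argue that any feasible $X$ for~\eqref{GLmod} is automatically feasible for~\eqref{eq:esser}: the constraints $X(i,j) \leq X(i,i) \leq 1$ for all $j$ give $X \leq 1$ directly. For the objective, note that for such $X$ the row maximum is attained on the diagonal, so $\norm{X(i,:)}_\infty = X(i,i)$; more generally for any $q$ one has $\norm{X(i,:)}_q \ge X(i,i) = \norm{X(i,:)}_\infty$. This only gives $\norm{X}_{1,q} \ge \trace(X)$, which is the wrong direction — but the point is that the \emph{optimal} value of~\eqref{eq:esser} is at most the optimal value of~\eqref{GLmod} provided the $q=\infty$ case is used, since then equality $\norm{X(i,:)}_\infty = X(i,i)$ holds on~\eqref{GLmod}-feasible points. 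The statement of the theorem is about a general column-wise norm in the residual constraint, not about $q$, so I should be careful here: I will treat the $q = \infty$ model~\eqref{eq:esser} (as in~\cite{EMO12,FM16}), for which this direction reads: a~\eqref{GLmod}-feasible $X$ is~\eqref{eq:esser}-feasible with equal objective value, hence $\mathrm{opt}\eqref{eq:esser} \le \mathrm{opt}\eqref{GLmod}$.

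\emph{From~\eqref{eq:esser} to~\eqref{GLmod}.} This is the direction that needs the "trivial transformation." Given an~\eqref{eq:esser}-optimal $X$, I would define $\widehat{X}$ by moving, in each row $i$, the value $\norm{X(i,:)}_\infty$ onto the diagonal entry $\widehat{X}(i,i)$, while leaving the off-diagonal entries untouched (or, more precisely, constructing $\widehat X$ to agree with $X$ off the diagonal and to have $\widehat X(i,i) = \norm{X(i,:)}_\infty \le 1$). Then $\widehat X$ satisfies $\widehat X(i,j) \le \widehat X(i,i) \le 1$ by construction, so it is~\eqref{GLmod}-feasible, provided the residual constraint still holds. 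The key point to check — and I expect this to be the main obstacle — is that replacing the $i$-th diagonal entry does not increase the residual $\norm{M - MX}$. Here is where the column-wise norm hypothesis and a structural fact about optimal solutions enter: at an optimum of~\eqref{eq:esser} one expects $X(i,i)$ to already equal the row maximum (otherwise one could decrease it without affecting feasibility, since row $i$ of $X$ multiplies column $i$ of $M$, and... one has to verify the residual is non-increasing — this uses that, for near-separable data, the diagonal index is "self-representing" and can absorb weight; the column-wise structure of the norm lets one analyze the residual column by column). If the residual does increase for some column $j$, I would instead argue via an exchange/averaging argument that there is an alternative optimal solution whose diagonal already dominates its row, reducing to the clean case. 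Once both inequalities $\mathrm{opt}\eqref{eq:esser} \le \mathrm{opt}\eqref{GLmod}$ and $\mathrm{opt}\eqref{GLmod} \le \mathrm{opt}\eqref{eq:esser}$ are in hand, the three bullet points follow: the optimal values coincide; a~\eqref{GLmod}-optimum is feasible for~\eqref{eq:esser} with the same (hence optimal) value; and the transformation $X \mapsto \widehat X$ sends an~\eqref{eq:esser}-optimum to a~\eqref{GLmod}-feasible point of the same value, which is therefore~\eqref{GLmod}-optimal.

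The delicate step, then, is showing the diagonal-lifting transformation preserves the residual constraint; I anticipate it rests on the observation that in~\eqref{eq:esser} the constraint $X \le 1$ is the \emph{only} thing preventing the solver from piling all of row $i$'s mass onto the diagonal, together with the fact that column $i$ of $M$ is one of the data columns, so reallocating representation weight toward "using column $i$ to represent itself" is always at least as good. I would make this precise using the separability structure and the column-wise form of $\norm{\cdot}$, possibly invoking that it suffices to prove the claim for the noiseless identity $M = MX$ and then perturb, or by a direct per-column estimate $\norm{M(:,j) - M\widehat X(:,j)} \le \norm{M(:,j) - MX(:,j)}$.
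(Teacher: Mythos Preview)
Your high-level outline matches the paper's --- feasibility inclusion from \eqref{GLmod} to \eqref{eq:esser} with equal objective (for $q=\infty$), and a transformation from an \eqref{eq:esser}-optimum to a \eqref{GLmod}-feasible point with the same value --- but the transformation you propose does not work, and your suggested remedies miss the actual mechanism.

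You set $\widehat X(i,i) = \norm{X(i,:)}_\infty$ and leave the rest of $X$ untouched. This changes only the $(i,i)$ entry, hence only column $i$ of $M - MX$, and the new column-$i$ residual is the old one minus $(\widehat X_{ii} - X^*_{ii})M(:,i)$; there is no reason its $\norm{\cdot}_c$-norm should not increase. (Even in the favourable case $M(:,i) = MX^*(:,i)$, your lift makes the column-$i$ residual equal $(X^*_{ii}-\widehat X_{ii})M(:,i) \neq 0$.) Separability of $M$ is not assumed in the theorem and cannot be invoked; the per-column inequality you hope for is false in general.

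The paper's transformation instead modifies \emph{all of column $i$} of $X$: it increases $X_{ii}$ toward the row-$i$ maximum while scaling down the off-diagonal entries $X(\mathcal I,i)$, $\mathcal I = \{1,\dots,n\}\setminus\{i\}$, by a factor $<1$ chosen so that the column-$i$ residual $(1-X_{ii})M(:,i) - M(:,\mathcal I)X(\mathcal I,i)$ is itself multiplied by a factor $\le 1$. The column-wise norm hypothesis then ensures the total residual does not increase. This move also does not increase $\norm{X}_{1,\infty}$: row $i$'s maximum was already attained off the diagonal, so raising $X_{ii}$ up to it leaves that max unchanged, and in every other row $k\neq i$ the only affected entry $X_{ki}$ shrinks. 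A preliminary step --- establishing $\norm{M-MX^*}=\epsilon$ at any nonzero optimum, via the global scaling $X^* \mapsto \delta X^*$ with $\delta<1$ --- is what makes the argument close. Your instinct that at optimality $X^*_{ii}$ should already equal its row maximum is essentially correct, but it is \emph{proved} by exactly this column-scaling construction combined with tightness of the residual constraint, not by separability or a perturbation from the noiseless case.
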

\begin{proof}
    See Appendix~\ref{sec:proof_equiv}.
\end{proof}

Theorem~2 implies that any robustness result for (2) applies to (3),
and vice versa.  It is therefore meaningful to compare the robustness
results of~\cite{GL13} and~\cite{FM16}.  It turns out that the results
in~\cite{GL13} are stronger because, as opposed to Fu and
Ma~\cite{FM16}, it does not require the absence of duplicated columns
(which is a rather strong assumption); see Theorem 1.  However, it is
interesting to note  that, in the absence of duplicated columns, both
robustness results essentially coincide (the error bounds are the same
up to some constant multiplicative factors), namely~\cite[Th.~2]{GL13}
and \cite[Th.~1]{FM16}.  We provide this result here for completeness: 

\begin{theorem}[\cite{GL13}, Th.~2] 
Let $M = M(:,\mathcal{K}) H$ where $M(:,\mathcal{K})$ is
$\kappa$-robustly conical and where the entries of each column of $H$
are at most one.  Let also $\tilde{M} = M+N$, and $H(i,j) \leq \beta <
1$ for all $1 \leq i \leq m$ and $j \notin \mathcal{K}$ (this is the
condition that there is no duplicates of the columns of
$M(:,\mathcal{K})$). If $\epsilon := \max_{1 \leq j \leq n}
\norm{N(:,j)}_1 < \frac{\kappa (1-\beta)}{20}$, then the
model~\eqref{GLmod} allows to recover the columns of
$M(:,\mathcal{K})$ up to error $\epsilon$.
\end{theorem}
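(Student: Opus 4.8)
The statement is really an \emph{exact identification} result in disguise: once the correct index set $\mathcal{K}$ has been singled out from an optimal solution $\hat X$ of~\eqref{GLmod} applied to $\tilde M$, the recovered columns are $\tilde M(:,k) = M(:,k) + N(:,k)$, which differ from the ground truth by $\norm{N(:,k)}_1 \le \epsilon$. Hence the entire task reduces to showing that the diagonal of $\hat X$ pinpoints $\mathcal{K}$, after which the error $\epsilon$ (rather than an amplified $r\epsilon/\kappa$ as in Theorem~1) is automatic. The plan is therefore to (i) bound the optimal objective from above by a ground-truth construction, (ii) prove a clean dichotomy on the diagonal of $\hat X$ — large on $\mathcal{K}$, small off $\mathcal{K}$ — and (iii) show the two regimes stay separated whenever $\epsilon < \kappa(1-\beta)/20$.

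First I would exhibit the canonical feasible point $X^\star$ supported on the rows indexed by $\mathcal{K}$, with $X^\star(\mathcal{K},:) = H$ and $X^\star(k,k) = 1$ for $k\in\mathcal{K}$ (the pure columns give $H(:,k)=e_k$). The domination constraints $X^\star(i,j)\le X^\star(i,i)\le 1$ hold because $H\le 1$, and since $MX^\star = M(:,\mathcal{K})H = M$ one computes columnwise in the $\ell_1$ norm that $\norm{\tilde M - \tilde M X^\star} = \norm{N(I-X^\star)} \le 2\epsilon$, using $\sum_i H(i,j)\le 1$. Thus $X^\star$ is feasible once the residual tolerance is taken commensurate with $\epsilon$ (the precise factor being harmless, as it is absorbed into the final constant), and therefore $\trace(\hat X) \le \trace(X^\star) = r$.

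The heart of the argument is to convert the $\ell_1$ residual bound into entrywise control. Writing $M = M(:,\mathcal{K})H$, the constraint $\norm{\tilde M - \tilde M\hat X}\le\epsilon$ becomes $\norm{M(:,\mathcal{K})(H - H\hat X)} = \mathcal{O}(\epsilon)$ columnwise; because $M(:,\mathcal{K})$ is $\kappa$-robustly conical, I would invoke the robust-conicality lemma of~\cite{GL13}, which lower-bounds $\norm{M(:,\mathcal{K})v}_1$ by $\kappa$ times an appropriate norm of $v$, to obtain the entrywise estimate $\bigl|(H - H\hat X)(\ell,j)\bigr| = \mathcal{O}(\epsilon/\kappa)$ for every pure direction $\ell$ and every column $j$. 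Reading this off along the pure rows and pure columns forces $\hat X(k,k)\ge 1 - \mathcal{O}(\epsilon/\kappa)$ for $k\in\mathcal{K}$. For the off-support entries I would use the duplicate-free hypothesis: since every non-pure column satisfies $H(i,j)\le\beta$, no column $j\notin\mathcal{K}$ can carry more than a $\beta$-fraction of its own pure content, and combining this with the domination constraints $\hat X(i,j)\le\hat X(i,i)$ and the conditioning bound yields $\hat X(j,j)\le\beta + \mathcal{O}(\epsilon/\kappa)$. This is precisely where $\beta<1$ does its work, and it is what removes the factor $r$ present in the coarser aggregate (trace-based) estimate of Theorem~1.

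Finally the two one-sided bounds combine: the $\mathcal{K}$-diagonal lies above $1 - \mathcal{O}(\epsilon/\kappa)$ while the complementary diagonal lies below $\beta + \mathcal{O}(\epsilon/\kappa)$, so a gap persists exactly when $1-\beta > \mathcal{O}(\epsilon/\kappa)$, i.e.\ when $\epsilon < \kappa(1-\beta)/20$ after the accumulated $\mathcal{O}(\cdot)$ constants are collected into the factor $20$. Thresholding the diagonal of $\hat X$ then returns exactly $\mathcal{K}$, and the recovery error $\epsilon$ follows as noted. The main obstacle I anticipate is the off-support bound $\hat X(j,j)\le\beta+\mathcal{O}(\epsilon/\kappa)$: unlike the lower bound on the pure entries, which follows fairly directly from robust conicality, the upper bound must rule out an adversarial $\hat X$ that parks self-weight on near-duplicate columns, and controlling this requires using the gap $1-\beta$ together with the coupling constraints in a column-by-column rather than aggregate fashion — this is the delicate step that simultaneously sharpens the error to $\epsilon$ and eliminates the dependence on $r$.
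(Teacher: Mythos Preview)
The paper does not prove this theorem: it is quoted verbatim from~\cite{GL13} (``We provide this result here for completeness'') and no argument whatsoever is given in the present manuscript. There is therefore nothing in this paper to compare your proposal against.

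That said, your outline is broadly in the spirit of the analysis in~\cite{GL13}: build a canonical feasible point supported on $\mathcal{K}$ to cap the optimal trace at $r$, then use robust conicality of $M(:,\mathcal{K})$ to translate the $\ell_1$ residual bound into entrywise control of $H-H\hat X$, and finally separate the diagonal into a high regime on $\mathcal{K}$ and a low regime off $\mathcal{K}$. The step you flag as delicate---the upper bound $\hat X(j,j)\le\beta+\mathcal{O}(\epsilon/\kappa)$ for $j\notin\mathcal{K}$---is indeed the crux, and your sketch there is thin: the sentence ``no column $j\notin\mathcal{K}$ can carry more than a $\beta$-fraction of its own pure content'' does not yet connect the hypothesis $H(i,j)\le\beta$ to the self-weight $\hat X(j,j)$, because $\hat X$ is an arbitrary optimal solution, not one built from $H$. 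Making this rigorous requires combining the residual bound on column $j$ with the domination constraints $\hat X(i,j)\le\hat X(i,i)$ \emph{and} the trace cap $\sum_i \hat X(i,i)\le r$ in a more careful accounting than you have written; without the latter ingredient, nothing prevents $\hat X(j,j)$ from being close to $1$ while the residual is absorbed elsewhere. So the approach is right, but the off-support bound needs an actual argument rather than an appeal to intuition.
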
 

The advantage of the formulation \eqref{GLmod} over \eqref{eq:esser} is
that the objective function is smooth. Moreover, as we will show in
Section~\ref{sec:proj}, projecting onto the feasible set can be made
efficiently (even when the model is generalized to the case where the
columns of the input matrix are not normalized). Hence, we will be
able to apply an optimal first-order method of smooth convex
optimization. 
%Moreover, the additional constraint reduces the search domain which
%is usually seen as a good thing.

\section{Convex Model without Normalization and Fast Gradient Method}  \label{convFG}

In this section, in order to be able to obtain a more practical model 
that can be optimized using techniques from smooth convex optimization,  
we derive a new model, namely~\eqref{GLqp2}, closely related to~\eqref{GLmod} but where 
\begin{itemize}

\item the assumption $X \leq 1$ is not necessary, 

\item $||.||$ is the Frobenius norm, which is smooth and arguably the most popular choice in practice, and 

\item Lagrangian duality is used to incorporate the error term $||M-MX||_F^2$ in the objective function. 

\end{itemize} 
Then, we apply a fast gradient method on~\eqref{GLqp2} (Algorithm~\ref{alg:fgnsr}), 
after having listed related algorithmic approaches to tackle similar optimization problems.

\subsection{ Avoiding Column Normalization }

	The model~\eqref{GLmod} can be generalized in case $X \nleq 1$, 
	where $M = MX$, without column normalization of $M$. 
	The advantage is twofold: column normalization
	(i)~is only possible for nonnegative input matrix, and
	(ii)~may introduce distortion in the data set as it would be equivalent to consider that the noise added to each data point (that is, each column of $M$) is proportional to it~\cite{KSK12}. 
	If one wants to consider absolute error (the norm of each column of the noise is independent on the norm of the input data),  then	the input matrix should not be normalized and the following model should be considered~\cite{GL13}:
	\begin{align}
\min_{X \in \Omega} \;  \trace(X)  & \quad \text{ such that }  \quad  ||M - MX||_F \leq \epsilon . \label{GLmod2}
\end{align}
The set $\Omega$ is defined as
\begin{equation} \label{eqn:omega}
    \Omega \defby
    \{ X \in \Rnn^{n,n} \setcond
        X_{ii} \le 1,
        w_i X_{ij} \le w_j X_{ii} \forall i,j \},
\end{equation}
where the vector $w \in \Rnn^n$ are the column $\ell_1$ norms of $M$,
that is, $w_j = \norm{M(:,j)}_1$ for all $j$.  The upper bounds  $X_{ij}
\le \frac{w_j}{w_i} X_{ii}$ come from the fact that each weight used
to reconstruct a data point inside the convex cone generated by some
extreme rays cannot exceed the ratio of the $\ell_1$ norm of that data
point to each individual extreme ray.

Note that the optimization problem~\eqref{GLmod2} is convex, and it
can be solved as a second order conic program (SOCP) in $n^2$
variables.  This large number of variables even for moderate values of
$n$ rules out the use of off-the-shelf SOCP optimization software.  In
this section, we describe an optimal first-order method to solve
\eqref{GLmod2}. A main contribution is in the (non-trivial) projection
onto the feasible set $\Omega$.

\subsection{Related Work}

To solve a model similar to~\eqref{GLmod}, Bittorf et
al.~\cite{BRRT12} used a stochastic subgradient descent method, with a
non-smooth objective function (they were using the component-wise
$\ell_1$ norm of $M-MX$). Although the cost per iteration is
relatively low with $\mathcal{O}(n^2)$ operations per iterations, the
convergence is quite slow.

To solve~\eqref{eq:esser} with $q = +\infty$ in~\cite{EMO12} and $q =
2$ in~\cite{ESV12}, authors propose an alternating direction method of
multipliers (ADMM).  However, ADMM is not an optimal first-order
method as the objective function converges at rate $\mathcal{O}(1/k)$
vs.\@ $\mathcal{O}(1/k^2)$ for optimal first-order methods, where $k$
is the iteration number.  Moreover, the cost per iteration of ADMM is
larger as it requires the introduction of new variables (one variable
$Y$ of the same dimension as $X$, Lagrangian multipliers and a
parameter which is not always easy to tune).

%In the original paper where \eqref{eq:esser} is introduced: use of a ADMM which converges much slower than optimal first-order methods. In Section~\ref{}, we compare both approaches.
%In Recht: use of l1 norm and not generalized version.

Another optimal first-order method was proposed in~\cite{LA13}. However, it solves a rather different optimization problem, namely
\begin{equation*}
    \min_{X, Q} p^T \diag(X) + \beta \norm{MX - M - Q}_F^2 + \lambda \norm{Q}_1,
\end{equation*}
where two regularization parameters have to be tuned while the objective function is non-smooth, so that authors use a local linear approximation approach to smooth the objective function.
They also require column normalization while our approach does not.
Also, they point out that it would be good to incorporate the constraints from~\eqref{GLmod}, which we do in this paper by developing an effective projection on the feasible set.

\subsection{Fast Gradient Method for \eqref{GLmod2}}
\label{sec:FGNSR}

It is possible to solve \eqref{GLmod2} using commercial solvers that
are usually based on interior-point methods, such as Gurobi.  However,
it is computationally rather expensive as there are $\mathcal{O}(n^2)$
variables, where $n$ is the number of columns of $M$.
%hence the cost per iteration is at least $\mathcal{\Omega}(n^3)$.
%For example, in hyperspectral imaging, it is the number of pixels
%present in the image and is of the order of millions.  Hence, using a
%commercial solver on a laptop, this model is limited to $n \sim 200$
%(see Section~\ref{xp} for a discussion). \\

Moreover, it has to be noted that in the separable NMF case, it is not crucial to obtain high accuracy solutions: the main information one wants to obtain is which columns of $M$ are the important ones. Hence it is particularly meaningful in this context to use first-order methods (slower convergence but much lower computational cost per iteration). 

  The additional constraints that allows to take into account the fact that the columns of $M$ are not normalized makes the feasible set more complicated, but we develop an efficient projection method, that allows us to design an optimal first-order method (namely, a fast gradient method).

The problem we propose to solve is
\begin{equation} \label{GLqp2}
    \min_{X \in \Omega} F(X) = \frac{1}{2} \norm{M - MX}_F^2
        + \mu p^T \diag(X),
\end{equation}
where $M \in \R^{m,n}$ is the input data matrix, and $X \in \Omega$ are
the basis reconstruction coefficients. 
Note that we have replaced $\trace(X)$ with the more general term $p^T \diag(X)$ 
(they coincide if $p$ is the vector of all ones). 
The reason is twofold: 
(1)~it makes the model more general, and 
(2)~it was shown in~\cite{BRRT12} that using such a vector $p$ (e.g., randomly
chosen with its entries close to one) allows to discriminates between (approximate) duplicate basis
vectors present in the data. %as it allows to break the symmetry. 
The penalty parameter $\mu \in \Rnn$ acts as a
Lagrange multiplier. From duality theory, 
there exists $\mu$ (which depends on the data $M$), 
such that models~\eqref{GLmod2} and~\eqref{GLqp2} are equivalent~\cite{wright1999numerical} (given that $p$ is the vector of all ones).

\algsetup{indent=2em}
\begin{algorithm}[ht!]
\caption{Fast Gradient Method for Nonnegative Sparse Regression with
    Self Dictionary (FGNSR)} \label{alg:fgnsr}
\begin{algorithmic}[1]
\REQUIRE A matrix $M \in \mathbb{R}^{m,n}$, number $r$ of columns to
    extract, a vector $p \in \mathbb{R}^n_{++}$ whose entries are close to 1, a
    penalty parameter $\mu$, and maximum number of iterations
    {\ttfamily maxiter}.

    \ENSURE An set $\mathcal{K} \subset \{1,\dotsc,n\}$ of column
    indices such that $\min_{H\in \Rnn^{r,n}} \norm{M - M(:,\mathcal{K}) H}_F$
    is small.

    \STATE \COMMENT{Initialization}
    \STATE $\alpha_0 \leftarrow 0.05$; $Y \leftarrow 0_{n,n}$;
        $X \leftarrow Y$; $L \leftarrow \sigma_{\max}(M)^2$;
        \label{line:init}
    \FOR{$k = 1 :$ {\ttfamily maxiter}}
        \STATE $Y_p \leftarrow Y$;
        \STATE $\nabla F(X) \leftarrow M^T M X - M^T M + \mu \diag(p)$;
            \label{line:GEMM}
        \STATE\COMMENT{Projection on $\Omega$; see Section~\ref{sec:proj}}
        \STATE $Y \leftarrow \mathcal{P}_{\Omega}\left(
            X - \frac{1}{L} \nabla F(X) \right)$;\label{line:proj}
        \STATE $X \leftarrow Y + \beta_k (Y - Y_p)$, \;
            where $\beta_k =  \frac{\alpha_{k-1} (1-\alpha_{k-1})}{
            \alpha_{k-1}^2 + \alpha_{k}}$ such that $\alpha_k \geq 0$ and
            $\alpha_k^2 = (1-\alpha_{k}) \alpha_{k-1}^2$;
    \ENDFOR
    \STATE $\mathcal{K} \leftarrow$ postprocess($X$,$r$);
    \COMMENT{The simplest way is to pick the $r$ largest entries of
    $\diag(X)$ as done in \cite{BRRT12}.  In the presence of
    (near-)duplicated columns of $M$, one should use more sophisticated
    strategies~\cite{GL13}.} \label{line:postprocess}
\end{algorithmic}
\end{algorithm}

Algorithm~\ref{alg:fgnsr} is a first-order method for minimizing
$F(X)$ over $\Omega$, based on Nesterov's fast gradient
method~\cite{Y04}.  Here ``fast'' refers to the fact that it attains
the best possible convergence rate of $\mathcal{O}(1/k^2)$ in the
first-order regime.  Because ${M}$ is not necessarily full rank (in
particular $\rank({M}) \leq r$ when $M$ is a $r$-separable matrix
without noise), the objective function of \eqref{GLqp2} is not
necessarily strongly convex. However, its gradient is Lipschitz
continuous with constant $L = \lambda_{\max}({M}^T {M}) =
\sigma_{\max}({M})^2$, which is sufficent to guarantee the claimed
convergence rate.

The requested number of columns $r$ in Algorithm~\ref{alg:fgnsr} is
used only in the postprocessing step (line~\ref{line:postprocess}).
Hence upon termination the obtained matrix $X$ can be used to
extract multiple NMFs, corresponding to different ranks, and to pick
the most appropriate one among them for the application at hand.

%We propose to initialize $X$ using the near-separable NMF algorithm from \cite{GV14}, because it is extremely cheap while being rather effective (see Section~\ref{xp}).

The penalty parameter $\mu$ in \eqref{GLqp2} is crucial as it balances the
importance between the approximation error $\norm{M - M X }_F^2$ and the fact that we want the diagonal of $X$ to be as sparse as possible.
On one hand, if $\mu$ is too large, then the term $\norm{M - M X}_F^2$ will not have much importance in the objective function
leading to a poor approximation. On the other hand, if $\mu$ is too
small, then $\norm{M - M X}_F^2$ will have to be very small and $X$
will be close to the identity matrix. However, in our experience, it seems
that the output of Algorithm~\ref{alg:fgnsr} is not too sensitive to this scaling.
The main a reason is that only the largest entries of (the diagonal of) $X$ will be extracted by the post-processing procedure while the most representative columns of $M$ remains the same independently of the value of $\mu$.
In other words, increasing $\mu$ will have the effect of increasing in
average the entries of $X$ but the rows corresponding to the important
columns of $M$ will continue having larger entries. Therefore the extracted index set $\mathcal{K}$ will remain the same.

To set the value of $\mu$, we propose the following heuristic which appears to work very well in practice:
\begin{itemize}

\item Extract a subset $\mathcal{K}$ of $r$ columns of ${M}$ with the fast algorithm proposed in \cite{GV14} (other fast separable NMF algorithms would also be possible);
%(it turns out that using a random set of $r$ indices also works);

\item Compute the corresponding optimal weight $H$,
\begin{equation*}
    H = \argmin_{Z \in \Rnn^{r,n}} || {M} - {M}(:,\mathcal{K}) Z ||_F^2,
\end{equation*}
using a few iterations of coordinate descent; see \cite{GG12}.

\item Define $X_0(\mathcal{K},:) = H$ and $X_0(i,:) = 0$ for all $i
    \notin \mathcal{K}$.

\item Set $\mu = \frac{ \norm{{M} - {M} X_0}_F^2 }{p^T \diag(X_0)}$,
    to balance the importance of both terms in the objective function.

\end{itemize}

Note that if the noise level $\epsilon$, or an estimate thereof, is
given as an input, $\mu$ can be easily updated in the course of the
gradient iteration so that $\norm{M-MX}_F \approx \epsilon$: If
$\norm{M-MX}_F$ is too small (large) relative to $\epsilon$ in the
course of the gradient iteration, $\mu$ is simply increased
(decreased), and the method is restarted.  Of course these adjustments
should be carried out in a convergent scheme, say, geometrically
decreasing, in order to maintain convergence of Algorithm~\ref{alg:fgnsr}.

\begin{remark}
Algorithm~\ref{alg:fgnsr} can be directly generalized to any other
smooth norm for which the gradient is Lipschitz
continuous and can be computed efficiently. 
\end{remark}

\subsection{Euclidean Projection on $\Omega$}

\label{sec:proj}

In Algorithm~\ref{alg:fgnsr} we need to compute the Euclidean projection of a
point $X \in \R^{n,n}$ on the set $\Omega$ from Equation~\eqref{eqn:omega}, denoted $\mathcal{P}_{\Omega}$.
Recall that for a convex subset $C \in
\R^n$ of an Euclidean vector space, a function $\phi: \R^n \rightarrow
C$ is an Euclidean projection on $C$ if for all $x \in \R^n$
\begin{equation*}
    \norm{x - \phi(x)} = \min_{z \in C} \norm{x - z}.
\end{equation*}
We describe in Appendix~\ref{sec:projection} how to compute this projection
efficiently. More precisely, we show how to solve the problem $\min_{Z
\in \Omega} \norm{X - F}_F$ in $\mathcal{O}(n^2 \log n)$ operations.
In the unweighted case, that is, $w_j \equiv 1$ for all $1 \le j \le
n$, our algorithm is similar to the one described in~\cite{BRRT12},
but the inclusion of non-unit weights makes the details very much
different.  The worst case bound of $\mathcal{O}(n^2 \log n)$
operations will typically overestimate the computational cost if
appropriate data structures are used.  This is explained in
Remark~\ref{rem:datastruct}, Appendix~\ref{sec:projection}.

\subsection{Computational Cost}

In order to find the asymptotic computational cost of
Algorithm~\ref{alg:fgnsr}, we analyze the three main
steps as follows.
\begin{itemize}

    \item[Line \ref{line:init}:] The maximum singular value of an
        $m$-by-$n$ matrix can be well approximated with a few steps of
        the power method, requiring $\mathcal{O}\left( mn \right)$
        operations.

    \item[Line \ref{line:GEMM}:] The matrix ${M}^T {M}$ should be
        computed only once at a cost of $\mathcal{O}\left( m n^2
        \right)$ operations. If $m \geq 2n$, then computing $({M}^T
        {M}) X$ requires $\mathcal{O}\left( n^3 \right)$ operations.
        Otherwise, one should first compute ${M}X$ at a cost of
        $\mathcal{O}\left( m n^2 \right)$ operations and then ${M}^T
        ({M} X)$ at a cost of $\mathcal{O}\left( m n^2 \right)$
        operations (the total being smaller than $n^3$ if $m \leq
        2n$).

    \item[Line \ref{line:proj}:] The projection onto $\Omega$ of an
        $n$-by-$n$ matrix $X$ requires $\mathcal{O}\left( n^2  \log
        n\right)$ operations (the $\log n$ factor comes from the fact
        that we need to sort the entries of each row of $X$); see
        Section~\ref{sec:proj} for the details about the projection
        step.  Note that each row of $X$ can be projected
        independently hence this step is easily parallelizable.
        Moreover, many rows of $X$ are expected to be all-zeros and
        their projection is trivial.

\end{itemize}

Hence the number of operations is in $\mathcal{O}(mn^2 + n^2 \log
n)$, and since we typically have $m \geq \log n$,
this reduces to $\mathcal{O}\left(m n^2 \right)$ operations.

The computational cost could potentially be decreased using random
projections of the data points to reduce the dimension $m$ of the
input matrix; see, e.g.,~\cite{ding2013topic, benson2014scalable}.  It
would be interesting to combine these techniques with
Algorithm~\ref{alg:fgnsr} in future work.

\section{Numerical Experiments}
\label{sec:exp}

We now study the noise robustness of Algorithm~\ref{alg:fgnsr}
numerically, and compare it to several other state-of-the-art methods
for near-separable NMF problems on a number of hyperspectral image
data sets.  We briefly summarize the different algorithms under
consideration as follows.

\begin{asparadesc}

    \item[Successive projection algorithm (SPA).] SPA extracts
        recursively $r$ columns of the input matrix ${M}$.  At each
        step, it selects the column with the largest $\ell_2$ norm,
        and projects all the columns of ${M}$ on the orthogonal
        complement of the extracted column~\cite{MC01}.  SPA was shown
        to be robust to noise~\cite{GV14}. SPA can also be interpreted
        as a greedy method to solve the sparse regression model with
        self dictionary~\cite{FM15}.

    \item [XRAY.] It recursively extracts columns of the input
        unnormalized matrix ${M}$ corresponding to an extreme ray of
        the cone generated by the columns of ${M}$, and then projects
        all the columns of ${M}$ on the cone generated by the
        extracted columns.  We used the variant referred to as
        ``max''~\cite{KSK12}.

    \item[Successive nonnegative projection algorithm (SNPA).] A
        variant of SPA using the nonnegativity constraints in the
        projection step~\cite{G14b}.  To the best of our knowledge, it
        is the provably most robust sequential algorithm for separable
        NMF (in particular, it does not need $M(:\mathcal{K})$ to be
        full rank).

    \item[Exact SOCP solution.]  We solve the exact
        model~\eqref{GLmod2} using the SOCP solver of
        Gurobi\footnote{\url{https://www.gurobi.com}}, an interior
        point method.  The obtained solution
        will serve as a ``reference solution''.

    \item[FGNSR.]  A {\textsc Matlab}/C implementation of
        Algorithm~\ref{alg:fgnsr}, which is publicly
        available\footnote{\url{https://github.com/rluce/FGNSR}}.

\end{asparadesc}

Our comparison does not include algorithms using linear functions to
identify vertices (such as the pure pixel index algorithm~\cite{B94}
and vertex component analysis~\cite{ND05}) because they are not robust
to noise and do not perform well for the challenging synthetic data
sets described hereafter; see~\cite{GV14}.

In all our experiments with FGNSR and the exact SOCP solution
to~\eqref{GLmod2} we use the simplest postprocessing to extract the
sought for index set $\mathcal{K}$ from the solution matrix $X$
in~\eqref{GLmod2}: We always pick the indices of the $r$ largest
diagonal values of $X$ (see final step in Algorithm~\ref{alg:fgnsr}).

Table~\ref{comptable} summarizes the following information for the
different algorithms: computational cost, memory requirement,
parameters, and whether the $H$ is required to be column normalized.
The FLOP count and memory requirement for the exact solution of the
SOCP via an interior point method depends on the actual SOCP
formulation used, as well as on the sparsity of the resulting problem.
In any case, they are orders of magnitudes greater than for the other
algorithms.

We complement these information with the
average wall clock run times for the small ``middlepoint'' matrices
from Section~\ref{sec:synexp} ($m=50$, $n=55$) in the last column.
More run time results are shown in Section~\ref{sec:hsiexp}.

\begin{table*}[ht]
\begin{center}
\caption{Complexity Comparison for a $m$-by-$n$ Input Matrix.
    \label{comptable}}
\begin{tabular}{|c||c|c|c|c|c|}
\hline
& FLOPs    & Memory    &  Parameters &  Normalization  & Run time in
sec.~\ref{sec:synexp}\\  \hline  \hline
SPA  & $2mnr$ + $\mathcal{O}(m r^2)$ &	   $\mathcal{O}(mn)$  & $r$ & Yes & $<0.01$s\\
XRAY & $\mathcal{O}(mnr)$    & $\mathcal{O}(mn)$  & $r$ & No & $0.03$s\\
SNPA & $\mathcal{O}(mnr)$ &	   $\mathcal{O}(mn)$  & $r$ & Yes & $0.10$s\\
SOCP (Gurobi, IPM) & $\mathcal{O}(n^6)$  & $\mathcal{O}(n^4)$ & $\norm{N}_F$ & No & $2.78$s\\
FGNSR & $\mathcal{O}(m n^2)$ &  $\mathcal{O}(mn + n^2)$ &  $r$ or $\norm{N}_F$ & No   & $0.09$s\\ \hline
\end{tabular}
\end{center}
\end{table*}

In the following section~\ref{sec:synexp} we study numerically the noise
robustness of the model~\ref{GLqp2} on an artificial dataset, and in
section~\ref{sec:hsiexp} we compare the methods from above to
real-world hyperspectral image data sets.

\subsection{Robustness Study on Synthetic Datasets}
\label{sec:synexp}

The data set we consider is specifically designed to test algorithms
for their robustness against noise.  We set $m = 50$,
$n = 55$ and $r = 10$. Given the noise level $\epsilon$, a noisy $r$-separable matrix
\begin{equation}
    \label{eq:middata}
    M = WH + N \in \R^{m,n}
\end{equation}
is generated as follows:

\begin{itemize}

    \item Each entry of the matrix $W$ is generated uniformly at
        random in the interval $[0,1]$ (using the {\ttfamily rand}
        function of \textsc{Matlab}), and each column of $W$ is then
        normalized so that it sums to one.

    \item The first $r$ columns of $H$ are always taken as the
        identity matrix to satisfy the separability assumption. The
        remaining $\frac{r(r-1)}{2} = 45$ columns of $H$ contain all
        possible combinations of two nonzero entries equal to 0.5 at
        different positions.  Geometrically, this means that these
        $45$ columns of $M$ are the \emph{middle points} of all the
        pairs from the columns of $W$.

    \item No noise is added to the first $r$ columns of $M$, that is,
        $N(:,j) = 0$ for all $1 \leq j \leq r$, while all the other
        columns corresponding to the middle points are moved towards
        the exterior of the convex hull of the columns of $W$.
        Specifically, we set
        \begin{equation*}
            N(:,j) = M(:,j)-\bar{w},  \quad \text{ for } r+1 \leq j \leq n,
        \end{equation*}
        where $\bar{w}$ is the average of the columns of $W$
        (geometrically, this is the vertex centroid of the convex hull
        of the columns of $W$).  Finally, the noise matrix $N$ is
        scaled so that it matches the given noise level $\norm{N}_F =
        \epsilon$.

\end{itemize}
Finally, in order to prevent an artificial bias due to the ordering
in which $H$ is constructed, the columns of $M$ are randomly permuted.
We give the following illustration of this type of data set (with $m=r=3$):
\begin{center}
\includegraphics[width=5cm]{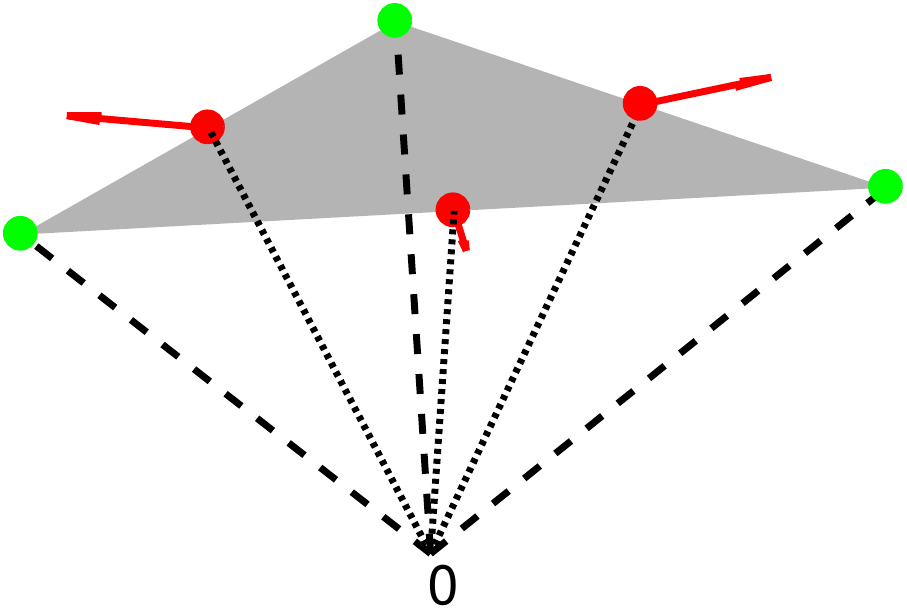}
\end{center}
The shaded area shows the convex hull of $W$, and the arrow attached
to the middle points indicate the direction of the noise added to
them.  With increasing noise level $\epsilon$, any algorithm for
recovering the conic basis $W$ will eventually be forced to select
some displaced middle points, and hence will fail to identify $W$,
which makes this data set useful for studying the noise robustness of
such algorithms.

In order to compare the algorithms listed at the beginning of the
section, two measures between zero and one will be used, one being the
best possible value and zero the worst: given a set of indices
$\mathcal{K}$ extracted by an algorithm, the measures are as follows:
\begin{itemize}

    \item \emph{MRSA.} We compute the mean-removed spectral angle
        between a selected basis column $w$ and the true basis
        column $w_*$, according to
        \begin{equation*}
            \arccos\left(\frac{\langle w - \bar{w}, w_* -
                \bar{w}_* \rangle}{\norm{w - \bar{w}} \norm{w_* -
        \bar{w}_*}}\right),
        \end{equation*}
        and normalizing the result to the interval $[0,100]$ (a value
        of zero is a perfect match).  In order to obtain a single
        number for a given computed basis matrix $W$ we take the mean
        of all individual MRSAs.

    \item \emph{Relative approximation error.} It is defined as
        \begin{equation*}
            1.0 - \frac{\min_{H \geq 0}
                \norm{M-M(:,\mathcal{K})H}_F}{\norm{M}_F} .
        \end{equation*}
        (Taking $H = 0$ gives a measure of zero).

\end{itemize}

\begin{figure*}[t]
\begin{center}
\includegraphics[width=0.48\textwidth]{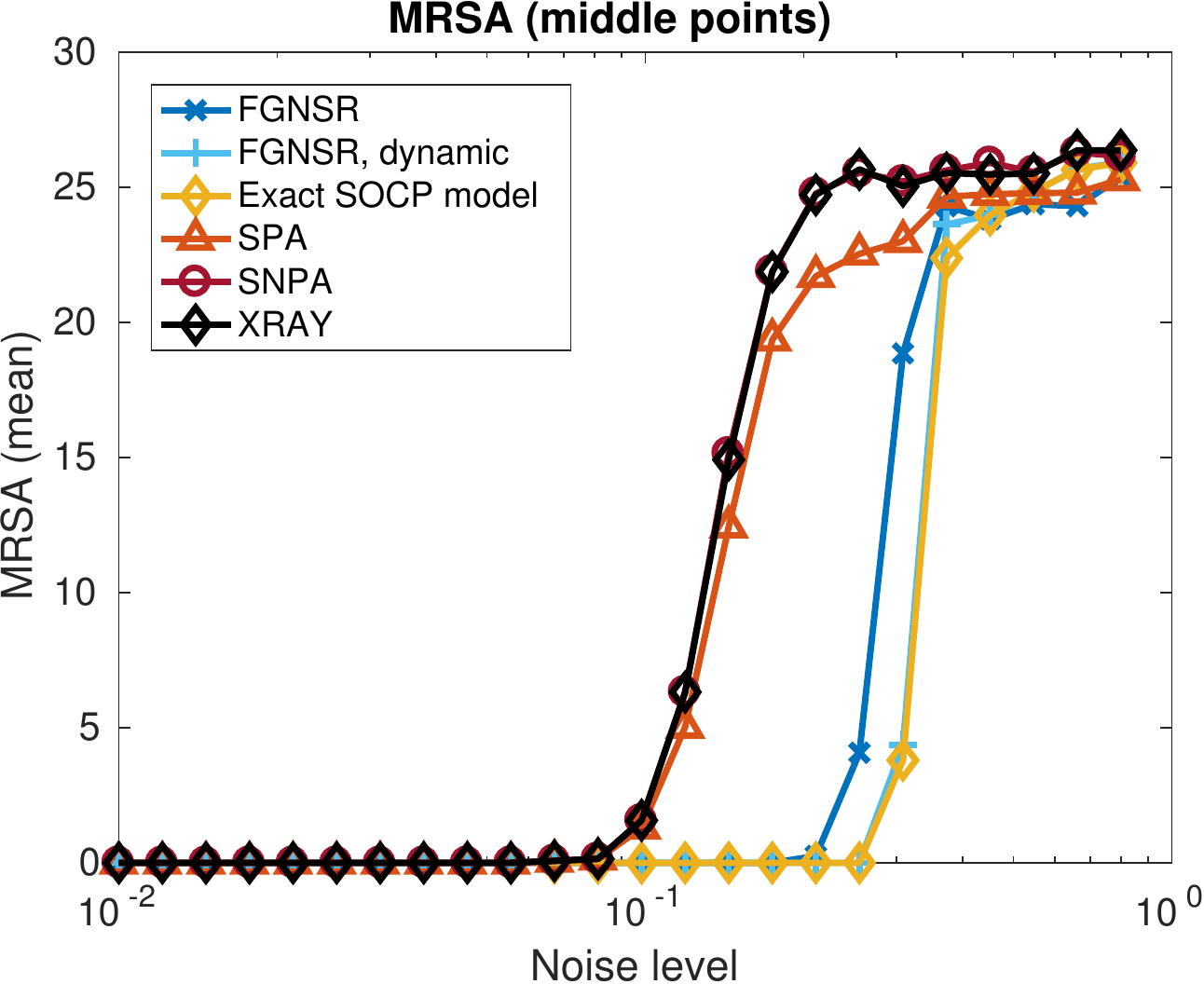}
\hfill
\includegraphics[width=0.48\textwidth]{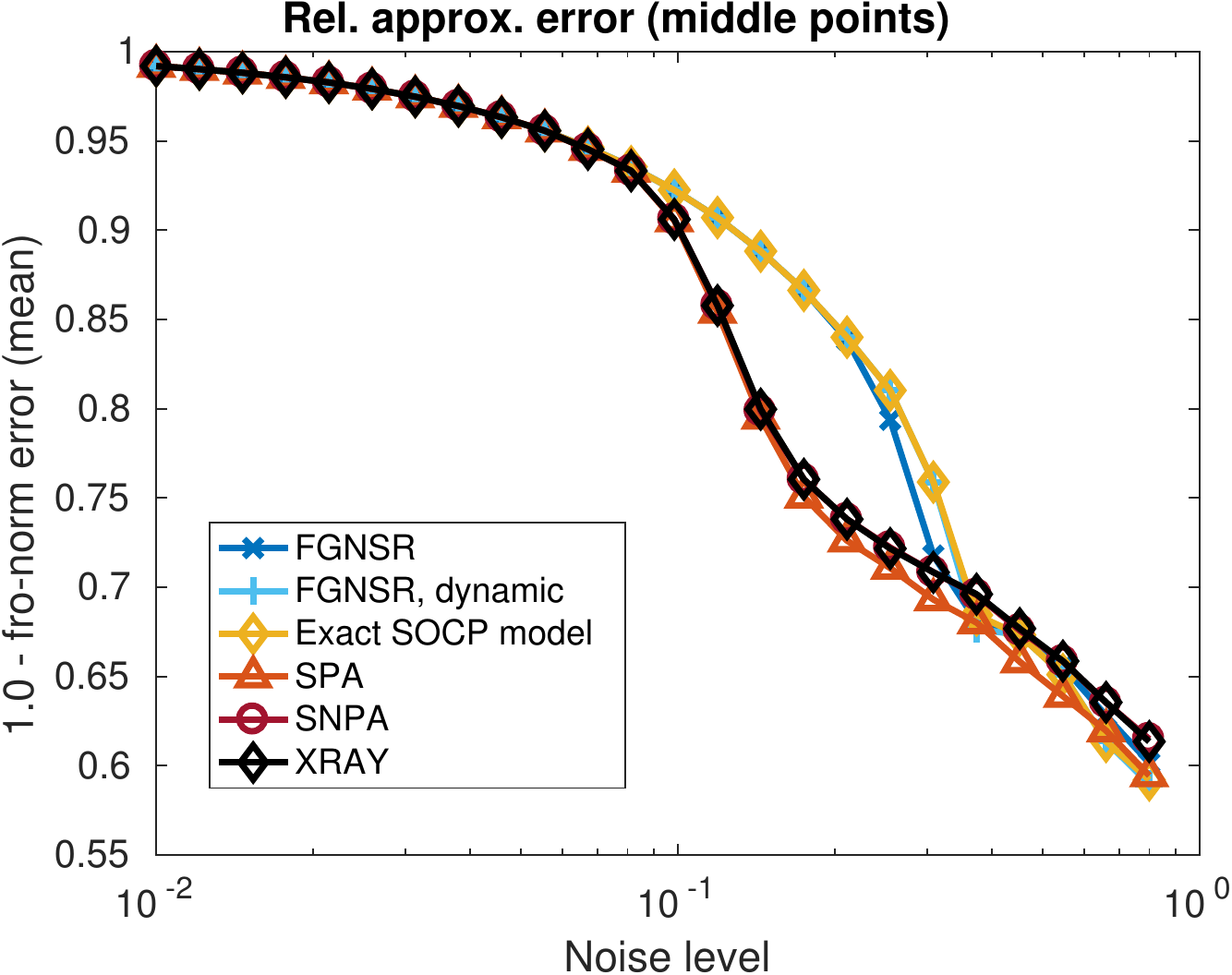}
\caption{Robustness study of various near-separable algorithms on the
    middle point set (see Sec.~\ref{sec:synexp}).  {\itshape Left:}
    Index recovery measure.  Note that the results of ``XRAY'' and
    ``SNPA'' are visually almost indistinguishable, as are those of
    ``Exact SOCP model'' and ``FGNSR, dynamic''. {\itshape Right:}
    Relative approximation error.\label{fig:midexp}}
\end{center}
\end{figure*}

Figure~\ref{fig:midexp} shows these two measures over a series of data
sets over increasing noise level $\epsilon$.   For each noise level, a
random middle point data set as described above was generated $25$
times, and the average of the respective measure over this sample
yields one data point at noise level $\epsilon$.

Algorithm~\ref{alg:fgnsr} is clearly superior to all other algorithms,
and recovers the true conic basis even at quite large noise levels.
With the heuristic choice for the multiplier $\mu$ (see
Section~\ref{sec:FGNSR}), the robustness is still slightly inferior to
the exact SOCP solution.  The results labelled ``FGNSR, dynamic''
refers to a variant of Algorithm~\ref{alg:fgnsr} where $\mu$ is
heuristically adjusted in the course of the gradient iteration so that
$\norm{M - MX}_F \approx \epsilon$. (Similarly, one could steer $\mu$
towards a prescribed value of $\trace(X)$).

Note that by construction the $\ell_1$ norm of all the columns in
$H$ in~\eqref{eq:middata} is $1.0$, which is in fact a requirement by
for some the algorithms considered here (see Table~\ref{comptable}).
It is an important feature of Algorithm~\ref{alg:fgnsr} that it is
also applicable if the columns of $H$ are not normalized.  We now
study this case in more detail.

Consider the following slight variation of the middle point data from
above:  Instead of placing the middle points by means of a convex
combination of two vertices, we now allow for conic combination of
these pairs, i.e., the middle point will be randomly scaled by some
scalar in $[\alpha^{-1}, \alpha]$ (we take $\alpha=4$).  The
following picture illustrates these \emph{scaled middle point data}:

\begin{center}
\includegraphics[width=5cm]{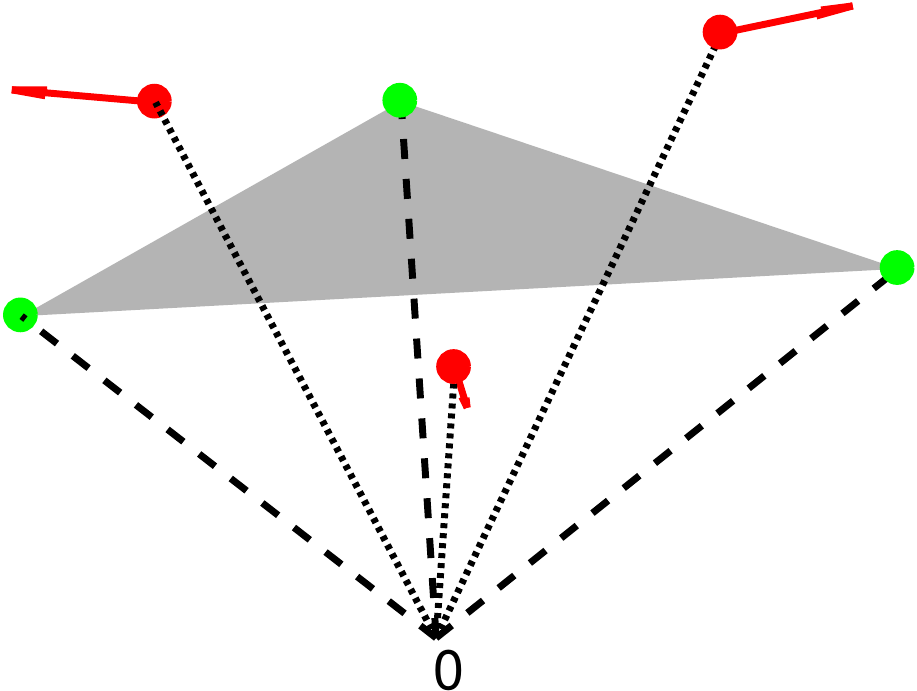}
\end{center}

We compare the algorithms listed at the beginning of this section on
this data set exactly as described above.  the results are shown in
Figure~\ref{fig:midexp_scaled}.  The results labeled ``normalize,
SPA'' and ``normalize, FGNSR'' refer to $\ell_1$-normalizing the
columns of the input matrix $M$ prior to applying SPA and FGNSR,
respectively.   From the results it is clear that FGNSR is by far the
most robust algorithm in this setting.

\begin{figure*}[t]
\begin{center}
\includegraphics[width=0.48\textwidth]{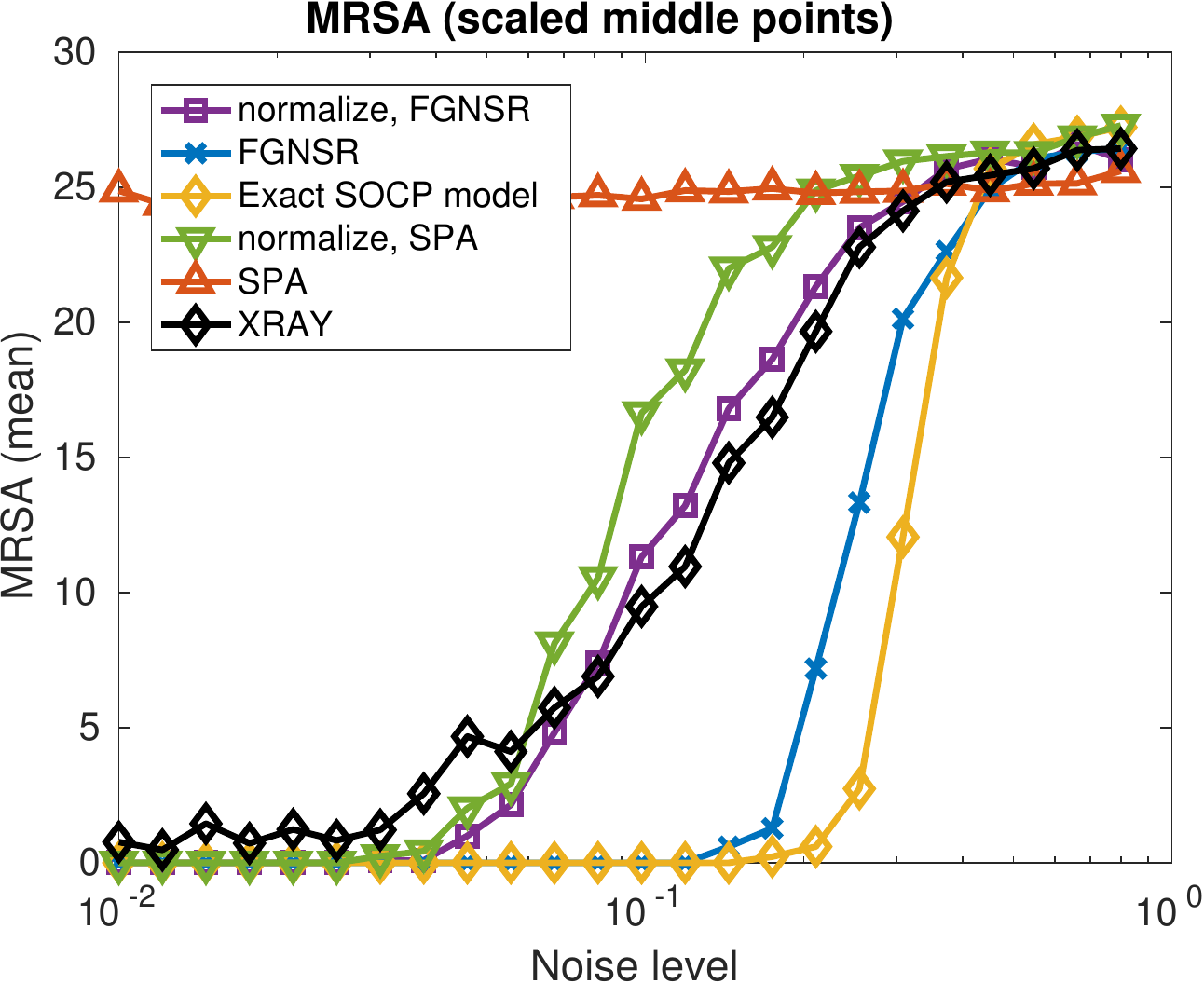}
\hfill
\includegraphics[width=0.48\textwidth]{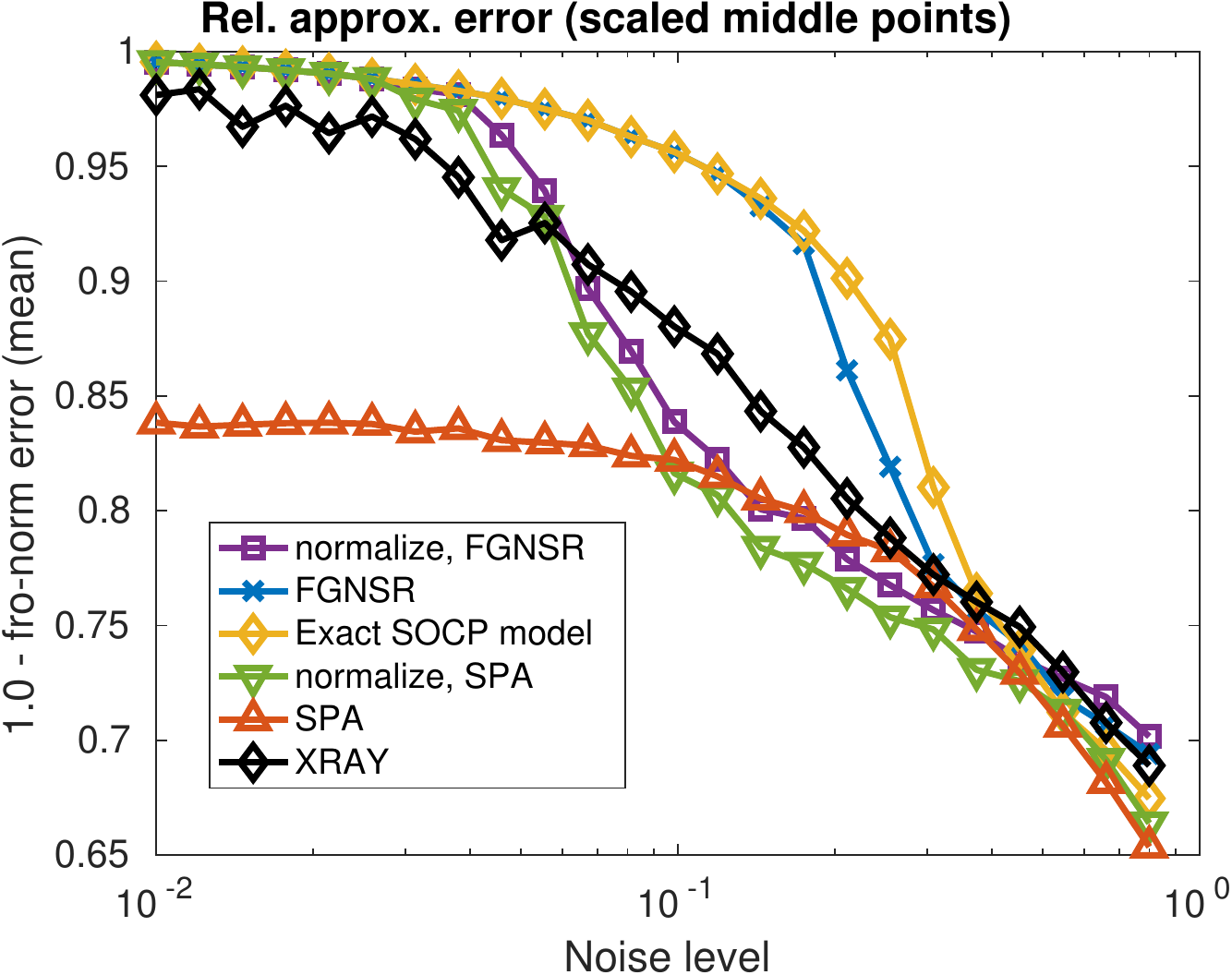}
\caption{Robustness study of various near-separable algorithms on the
    scaled middle point set (see
    Sec.~\ref{sec:synexp}).  The show data is analogous to the data in
    Figure~\ref{fig:midexp}.  The results for SNPA are not shown here
    to allow for a cleaner presentation; they are very similar to the ones for SPA. \label{fig:midexp_scaled}}
\end{center}
\end{figure*}

\subsection{Blind Hyperspectral Unmixing}

\label{sec:hsiexp}

A hyperspectral image (HSI) measures the fraction of light reflected
(the reflectance) by the pixels at many different wavelengths, usually
between 100 and 200. For example, most airborne hyperspectral systems
measure reflectance for wavelengths between 400nm and 2500nm, while
regular RGB images contain the reflectance for three visible
wavelengths: red at 650nm, green at 550nm and blue at 450nm.  Hence,
HSI provide much more detailed images with information invisible to
our naked eyes.  A HSI can be represented as a nonnegative $m$-by-$n$
matrix where the entry $(i,j)$ of matrix $M$ is the reflectance of the
$j$th pixel at the $i$th wavelength, so that each column of $M$ is the
so-called spectral signature of a given pixel.  Assuming the linear
mixing model, the spectral signature of each pixel equals the linear
combination of the spectral signatures of the constitutive materials
it contains, referred to as endmembers, where the weights correspond
to the abundance of each endmember in that pixel. This is a simple but
natural model widely used in the literature. For example, if a pixel
contains 60\% of grass and 40\% of water, its spectral signature will
be 0.6 times the spectral signature of the grass plus 0.4 times the
spectral signature of water, as 60\% is reflected by the grass and
40\% by the water.  Therefore, we have \[ M(:,j) = \sum_{k=1}^r W(:,k)
H(k,j) + N(:,j) , \] where $M(:,j)$ is the spectral signature of the
$j$th pixel, $r$ is the number of endmembers, $W(:,k)$ is the spectral
signature of the $k$th endmember, $H(k,j)$ is the abundance of the
$k$th endmember in the $j$th pixel, and $N$ represents the noise (and
modeling errors).  In this context, the separability assumption is
equivalent to the so-called pure-pixel assumption that requires that
for each endmember there exists a pixel containing only that
endmember, that is, for all $k$, there exists $j$ such that $M(:,j)
\approx W(:,k)$.

The theoretical robustness results of near-separable NMF algorithms do
not apply in most cases, the reasons being that
\begin{inparaenum}[(i)]
    \item the noise level is usually rather high,
    \item images contain outliers,
    \item the linear mixing model itself is incorrect (in particular
        because of multiple interactions of the light with the
        surface, or because of its interaction with the atmosphere),
    \item  the pure-pixel assumption is only approximately satisfied
        (or only some endmembers have pure pixels), and
    \item the number of endmembers is unknown (and usually endmembers
        in small proportion are considered as noise);
        see~\cite{Jose12} and the references therein.
\end{inparaenum}

However, near-separable NMF algorithms (a.k.a.\@ pure-pixel search
algorithms) usually allow to extract pure (or almost pure) pixels and
are very popular in the community; for example NFIND-R~\cite{Win99} or
vertex component analysis (VCA)~\cite{ND05}. They can also be
particularly useful to initialize more sophisticated method not based
on the pure-pixel assumption; see~\cite{Ma14}.

%We assume that the $m$-by-$n$ data matrix $M$ corresponds to an hyperspectral image with $n$ pixels and $m$ spectral bands. Hence, each column of $M$ corresponds to the spectral signature of a given pixel, that is, the reflectance\footnote{The fraction of the incident electromagnetic power that is reflected by a surface at a given wavelength.} of the pixel at $m$ different wavelengths. Under the linear mixing model, the spectral signature of a pixel results from the additive linear combination of the spectral signatures of the constitutive materials contained in that pixel. The separability assumption in this context, referred to as the pure-pixel assumption, requires that, for each constitutive material, there exists a pixel containing only that material. The aim of the near-separable NMF problem is therefore to identify these pure pixels which will allow to reconstruct the spectral signatures of all other pixels; see \cite{GV14} and the references therein for more details.

For most HSI, $n$ is of the order of millions, and it is impractical
to solve~\eqref{GLqp2} with either our fast gradient method or even
the interior point solver of Gurobi, as we did in
Section~\ref{sec:synexp}.
% RL: This needs validation, prefer to not say something precise here.
%
% In fact, allowing about one hour of computations on a personal
% computer, Gurobi can handle matrices with $n$ up to 200, and fast
% gradient up to $1000$.
In the next section, we adopt a strategy similar to that
in~\cite{EMO12} where a subset of pixels is first selected as a
preprocessing step. Then, we apply our model \eqref{GLqp2} on that
subset using an appropriate strategy which is described in the next
section.

% Selecting a Subset of columns of $M$ a priori using

\subsubsection{Subsampling and Scaling of HSI's}

A natural way to handle the situation when $n$ is large is to
preselect a subset of the columns of ${M}$ that are representative of
the data set. In~\cite{EMO12}, authors used $k$-means to identify that
subset.  However, $k$-means has several drawbacks: it cannot handle
different scaling of the data points, and scales badly as the number
of clusters increases, running in $\mathcal{O}(mnC)$ where $C$ is the
number of clusters to generate.  A much better alternative, that was
specifically designed to deal with HSI, is the hierarchical clustering
procedure developed in~\cite{GDK14}.  The computational cost is
$\mathcal{O}(mn \log_2 C)$ (given that it generates well-balanced
clusters).
%Moreover, it has the advantage to be less sensitive to outliers.

Keeping only the centroids generated by a clustering algorithm is a
natural way to subsample HSI.  However, it is important to take into
account the importance of each centroid, that is, the number of data
points attached to it.  Let $\mathcal{C}$ be the index set
corresponding to the centroids $M(:,\mathcal{C})$ of the extracted
clusters.  For $\card{\mathcal{C}}$ sufficiently large, each pixel will
be relatively close to its centroid: mathematically, for all $j$,
there exists $k \in \mathcal{C}$ such that $M(:,j) \approx M(:,k)$.
If we only allow the centroids in the dictionary and denote $X \in
\mathbb{R}^{\card{\mathcal{C}}, \card{\mathcal{C}}}$ the corresponding
weights, the error term can be approximated by
\begin{align*}
    & \norm{M - M(:,K) X}_F^2
    = \sum_{j=1}^n \norm{M(:,j) - M(:,K) X(:,j)}_F^2  \\
    & \approx \sum_{k \in \mathcal{C}} n_k \norm{M(:,k) - M(:,K) X(:,k)}_F^2 \\
    & = \sum_{k \in \mathcal{C}}  \norm{\sqrt{n_k} M(:,k) - \sqrt{n_k} M(:,K) X}_F^2,
\end{align*}
where $n_k$ the number of pixels in the $k$th cluster.

Therefore, in this section, we apply our model only to the matrix
$M(:,\mathcal{C})$ where each centroid is scaled according to the
square root of the number of points belonging to its cluster. This
allows us to take into account the importance of the different
clusters.  For example, an outlier will correspond to a cluster with a
single data points (provided that $\card{\mathcal{C}}$ sufficiently
large) hence its influence in the objective function will be
negligible in comparison with large clusters.

\paragraph*{Postprocessing of $X$}

In the synthetic data sets, we identified the subset $\mathcal{K}$ using the $r$ largest entries of
$X$. It worked well because (i)~the data sets did not contain any outlier,
and (ii)~there were no (near-)duplicated columns in the data sets.
In real data sets, these two conditions are usually not met.
Note however that the preprocessing clustering procedure aggregates (near-)duplicated columns. However, if a material is present in very large proportion of the image (e.g., the grass in the Urban data sets; see below), several clusters will be made mostly of that material.

Therefore, in order to extract
a set of column indices from the solution matrix $X$ (see
Algorithm~\ref{alg:fgnsr}, line~\ref{line:postprocess}), we will use a more sophisticated strategy.

The $i$th row of matrix $X$ provides the weights necessary to reconstruct each column of $M$ using the $i$th column of $M$ (since $M \approx MX$), while these entries are bounded by the diagonal entry $X_{ii}$.
From this, we note that
\begin{compactenum}[(i)]

\item If the $i$th row corresponds to an outlier, it will in general
    have its corresponding diagonal entry $X_{ii}$ non-zero but the
    other entries will be small (that is, $X_{ij}$ $j \neq i$).
    Therefore, it is important to also take into account off-diagonal
    entries of $X$ in the postprocessing: a row with a large norm will
    correspond to an endmember present in many pixels.  (A similar
    idea was already proposed in \cite[Section 3]{GV14}.)

\item Two rows of $X$ that are close to one another (up to a scaling
    factor) correspond to two endmembers that are present in the same
    pixels in the same proportions. Therefore, it is likely that these
    two rows correspond to the same endmember.  Since we would like to
    identify columns of $M$ that allow to reconstruct as many pixels
    as possible, we should try to identify rows of $X$ that are as
    different as possible. This will in particular allow us to avoid
    extracting near-duplicated columns.

\end{compactenum}

Finally, we need to identify rows (i)~with large norms (ii)~that are as different as one another as possible. This can be done using SPA on $X^T$: at each step, identify the row of $X$ with the largest norm and project the other rows on
its orthogonal complement (this is nothing but a QR-factorization with column pivoting).
We observe in practice that this postprocessing is particularly effective at avoiding outliers and near-duplicated columns (moreover, it is extremely fast).

\subsubsection{Experimental Setup}

In the following sections, we combine the hierarchical clustering procedure with our near-separable NMF algorithm and compare it with state-of-the-art pure-pixel search algorithms (namely SPA, VCA, SNPA, H2NMF and XRAY) on several HSI's.
We have included vertex component analysis (VCA)~\cite{ND05} because it is extremely popular in the hyperspectral unmixing community,
although it is not robust to noise~\cite{GV14}.
VCA is similar to SPA except that (i)~it first performs dimensionality reduction of the data using PCA to reduce the ambient space to dimension $r$, and
(ii)~selects the column maximizing a randomly generated linear function.

Because the clustering procedure already does some work to identify candidate pure pixels, it could be argued that the comparison between our hybrid approach and plain pure-pixel search algorithms is unfair. Therefore, we will also apply SPA, VCA, XRAY, H2NMF and SNPA on the subsampled data set. We subsample the data set by selecting 100 (resp.\@ 500) pixels using H2NMF, and denote the corresponding algorithms SPA-100 (resp.\@ SPA-500), VCA-100 (resp.\@ VCA-500), etc.

Because it is difficult to assess the quality of a solution on a real-world HSI, we use the relative error in percent: given the index set $\mathcal{K}$ extracted by an algorithm, we report
\begin{equation*}
    100  \frac{\min_{H \geq 0} \norm{M - M(:,\mathcal{K}) H}_F}{\norm{M}_F},
\end{equation*}
where $M$ is always the full data set.

The \textsc{Matlab} code used in this study is
available\footnote{\url{https://sites.google.com/site/nicolasgillis/}},
and all computations were carried out with \textsc{Matlab}-R2015b on a
standard Linux/Intel box.

\subsubsection{Data Sets and Results}

We will compare the different algorithms on the following data sets:
\begin{itemize}

    \item The Urban
        HSI\footnote{\url{http://www.erdc.usace.army.mil/}} is taken
        from  HYper-spectral Digital Imagery Collection Experiment
        (HYDICE) air-borne sensors, and contains 162 clean spectral
        bands where each image has dimension $307 \times 307$.  The
        corresponding near-separable nonnegative data matrix $M$
        therefore has dimension $162$ by $94249$.  The Urban data is
        mainly composed  of 6 types of materials: road, dirt, trees,
        roofs, grass and metal (as reported in~\cite{GWO09}).

    \item The San Diego airport HSI is also from the HYDICE air-borne
        sensors. It contains 158 clean bands, with $400 \times 400$
        pixels for each spectral image hence $M \in \mathbb{R}^{160000
        \times 158}_+$. There are about eight types of materials:
        three road surfaces, two roof tops, trees, grass and dirt;
        see, e.g.,~\cite{GDK14}.

    \item The Terrain HSI data set is constituted of 166 clean bands,
        each having $500 \times 307$ pixels, and is composed of about
        5 different materials: road, tree, bare soil, thin and tick
        grass\footnote{ \url{http://www.way2c.com/rs2.php}}.

\end{itemize}

%Figure~\ref{urband} displays the image and the spectral signatures of these six materials obtained using the N-FINDR5 algorithm \cite{Win99} plus manual adjustment from \cite{GWO09} (these were kindly provided to us by Todd Wittman).
%\begin{figure}[ht]
%\begin{center}
%%\includegraphics[height=6cm]{Urban.eps}
%\caption{Urban HSI: (1) road, (2) roof tops 1, (3) dirt, (4) grass, (5) trees, (6) roof tops 2.}
%\label{urband}
%\end{center}
%\end{figure}
%We observe that Fast Gradient is the only algorithm able to extract perfectly the six materials. In fact,  GV and XRAY both extract what seems to be an outlying spectral signature (in bold...). The reason is that both GV and XRAY only look at the pixels individually trying to extract a pixel located on the exterior of the cone generated by the columns of $M$. These algorithms are therefore very sensitive to outliers (as are in general most geometric algorithm for the near-separable NMF problem; see \cite{GV14}).
%Fast Gradient on the other side tries to identifying a few columns of $\tilde{M}$ which can approximate the all dataset as well as possible. Therefore, outliers are not good candidates as they can essentially be used only to approximate themselves.

Tables~\ref{urbanresults}--\ref{terrainresults} show the relative
error attained by the different algorithms on the data sets ``Urban''
($r=6$ and ($r=8$), ``San Diego'' ($r=8$ and $r=10$), and ``Terrain''
($r=5$ and $r=6$).  The reported time refers to the run time of the
algorithms, without the preprocessing step.

\begin{table}[ht]
\begin{center}
\begin{tabular}{|c||c|c||c|c|}
\hline
   &  $r = 6$  &  &  $r = 8$ & \\   \hline
   &  Time (s.)  & Rel.\@ error &  Time (s.)  & Rel.\@ error  \\   \hline
VCA      &  1.02  & 18.05 & 1.05 & 22.68 \\
VCA-100  &  0.05  & 6.67  & 0.07 & 4.76 \\
VCA-500  &  0.03  & 7.19  & 0.09 & 7.25 \\ \hline
SPA      &  0.26  & 9.58 & 0.32 & 9.45 \\
SPA-100  &  $<$0.01  & 9.49  & $<$0.01 & 5.01 \\
SPA-500  &  $<$0.01  & 10.05  & $<$0.01 & 8.86 \\ \hline
SNPA      & 13.60  & 9.63 & 23.02 &  5.64 \\
SNPA-100  &  0.10   & 11.03  &  0.15 & 6.17 \\
SNPA-500  &  0.15  & 10.05 & 0.25 & 8.86  \\ \hline
XRAY     &  28.17 & 7.50 & 95.34 & 6.82 \\
XRAY-100 &  0.11  & 6.78  & 0.17 & 6.57 \\
XRAY-500 &  0.15  & 8.07  & 0.28 & 7.36 \\ \hline
H2NMF    &  12.20 & 5.81  & 14.92 & 5.47 \\
H2NMF-100    & 0.16 & 7.11  & 0.23 & 6.14 \\
H2NMF-500    &  0.27 & 5.87  & 0.37 & 5.68  \\ \hline
FGNSR-100   &  2.73 & 5.58  & 2.55 & 4.62 \\
FGNSR-500   &  40.11 & \textbf{5.07}  & 39.49 & \textbf{4.08} \\ \hline
\end{tabular}
\caption{Numerical results for the Urban HSI (the best result is highlighted in bold).}
\label{urbanresults}
\end{center}
\end{table}

\begin{table}[ht]
\begin{center}
\begin{tabular}{|c||c|c||c|c|}
\hline
   &  $r = 8$  &  &  $r = 10$ & \\   \hline
   &  Time (s.)  & Rel.\@ error &  Time (s.)  & Rel.\@ error  \\   \hline
VCA      &  1.71  &  7.46 & 1.79 & 9.46 \\
VCA-100  &  0.07  &  8.49 & 0.12 & 6.08 \\
VCA-500  &  0.06  &  9.19 & 0.13 & 6.29 \\ \hline
SPA      &  0.53     & 12.62 &    0.61 & 7.01 \\
SPA-100  &  0.03     & 8.49  &    0.01 & 5.83 \\
SPA-500  &  $<$0.01  & 12.64 & $<$0.01 & 6.61 \\ \hline
SNPA      & 38.95 & 12.84 & 58.45 & 7.67 \\
SNPA-100  &  0.22 &  8.49 &  0.20 & 6.90 \\
SNPA-500  &  0.25 & 12.64 &  0.48 & 6.47 \\ \hline
XRAY     &  93.29 & 13.06 & 243.40 & 12.62 \\
XRAY-100 &  0.14  & 8.68  & 0.21 & 5.12 \\
XRAY-500 &  0.19  & 13.17  & 0.35 & 6.82 \\ \hline
H2NMF    &  21.51 & 4.75  & 24.42 & 4.28 \\
H2NMF-100    & 0.30 &  6.85  & 0.22 &  5.61 \\
H2NMF-500    & 0.33  & 6.78   &  0.38 & 5.75 \\ \hline
FGNSR-100   &  2.55 & \textbf{3.73}  & 2.47 & \textbf{3.40} \\
FGNSR-500   &  38.70 & 4.05  & 38.28 & \textbf{3.40} \\ \hline
\end{tabular}
\caption{Numerical results for the San Diego HSI (the best result is highlighted in bold).}
\label{sandiegoresults}
\end{center}
\end{table}

\begin{table}[ht]
\begin{center}
\begin{tabular}{|c||c|c||c|c|}
\hline
   &  $r = 5$  &  &  $r = 6$ & \\   \hline
   &  Time (s.)  & Rel.\@ error &  Time (s.)  & Rel.\@ error  \\   \hline
VCA      &  1.65  & 10.92 & 1.67 & 6.22 \\
VCA-100  &  0.02  &  5.59 & 0.03 & 7.33 \\
VCA-500  &  0.03  &  5.77 & 0.03 & 5.57 \\ \hline
SPA      &  0.38  &  5.89 & 0.43 & 4.81 \\
SPA-100  &  $<$0.01  & 4.74  & 0.01 & 3.95 \\
SPA-500  &  0.01  & 4.83  & 0.01 & 4.63 \\ \hline
SNPA      & 17.54 & 5.76  & 24.28 & 4.60  \\
SNPA-100  &  0.10 & 5.75  & 0.11 & 5.65 \\
SNPA-500  &  0.10 & 4.83 & 0.13  & 4.78 \\ \hline
XRAY     &  33.63 & 5.39 & 73.91 & 5.17 \\
XRAY-100 &  0.07  & 4.15  & 0.12 & 4.13 \\
XRAY-500 &  0.09  & 5.21  & 0.19 & 4.97 \\ \hline
H2NMF    &  18.23 & 5.09  & 20.92 & 4.85 \\
H2NMF-100    & 0.15 & 4.72  & 0.17 & 4.39  \\
H2NMF-500    & 0.23  & 5.43 & 0.29   & 5.35  \\ \hline
FGNSR-100   &  4.23 & \textbf{3.34}  & 2.63 & \textbf{3.21} \\
FGNSR-500   &  40.29 & 3.68  & 40.13 & 3.39 \\ \hline
\end{tabular}
\caption{Numerical results for the Terrain HSI (the best result is highlighted in bold).}
\label{terrainresults}
\end{center}
\end{table}

We observe that FGNSR-100 and FGNSR-500 perform consistently better
than all the other algorithms, although, as expected, at a higher
computational cost than SPA and VCA.  We summarize the results as
follows.
\begin{itemize}
\item For the Urban HSI with $r=6$ (resp.\@ $r=8$),
FGNSR-100 provides a solution with relative error 5.58\% (resp.\@ 4.62\%) and FGNSR-500 with relative error 5.07\% (resp.\@ 4.08\%),
the third best being VCA-100 with 5.94\% (resp.\@ SPA-100 with 5.01\%).

\item For the San Diego airport HSI with $r=8$ (resp.\@ $r=10$),
FGNSR-100 provides a solution with relative error 3.73\% (resp.\@ 3.40\%)
and FGNSR-500 with relative error 4.05\% (resp.\@ 3.40\%),
the third best being H2NMF with 4.75\% (resp.\@ XRAY-100 with 5.12\%).

\item For the Terrain HSI with $r=5$ (resp.\@ $r=6$), FGNSR-100 provides a solution with relative error 3.34\% (resp.\@ 3.21\%)
and FGNSR-500 with relative error 3.68\% (resp.\@ 3.39\%),
the third best being XRAY-100 with 4.15\% (resp.\@ SPA-100 with 3.95\%).
 \end{itemize}

It is interesting to note that, in most cases, near-separable algorithms applied on the subset of columns identified by H2NMF
%VCA-100 and SPA-100
perform much better than when applied
%than VCA and SPA applied
on the full data set.
The reason is that these algorithms tend to extract outlying pixels which are filtered out by the subsampling procedure (especially when the number of clusters is small).

\opt{ieee}{

We show the computed endmembers for the ``Urban'' data set in
Figure~\ref{fig:Urban_endmembers}, and the corresponding abundance
maps for each of the species is shown in
Figures~\ref{fig:Urban_abmap}.

\begin{figure}[t]
    \includegraphics[width=0.9\columnwidth]{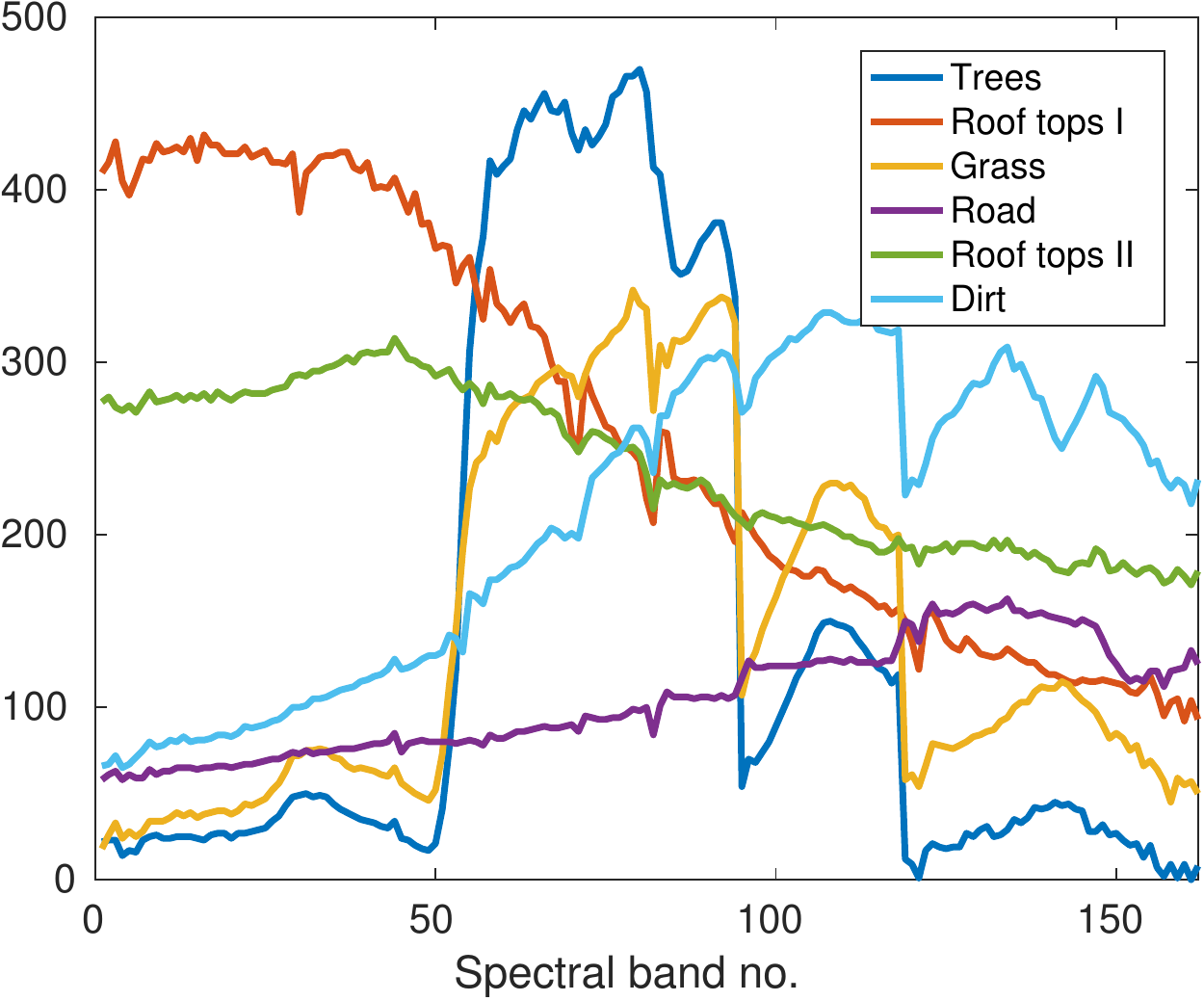}
    \caption{Endmembers of the Urban dataset obtained from
    FGNSR after preprocessing with $500$
    clusters\label{fig:Urban_endmembers}}
\end{figure}

\begin{figure*}[t]
    \begin{center}
    \includegraphics[width=0.99\textwidth]{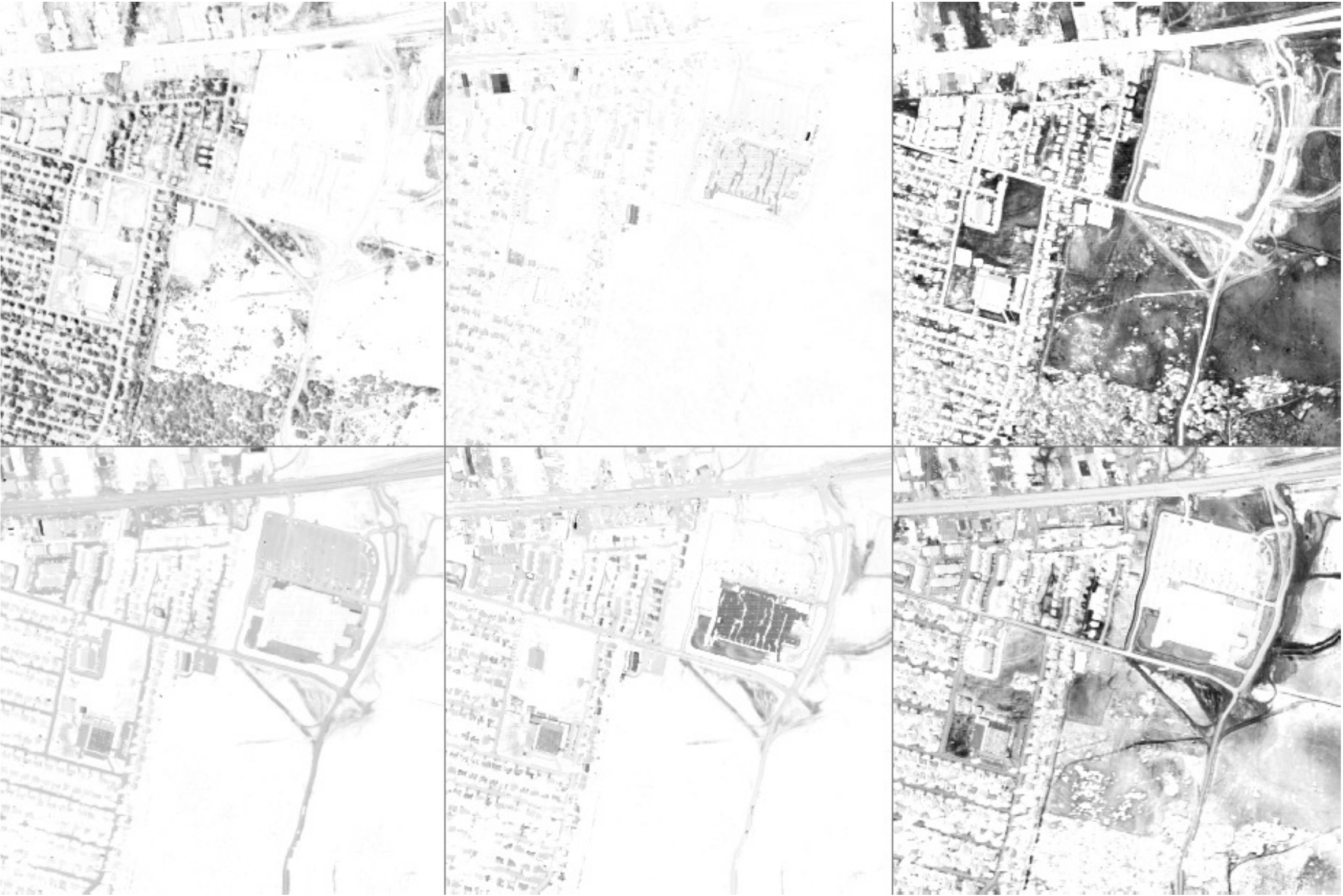}
\caption{Abundance maps corresponding to the endmembers extracted by FGNSR-500 for the Urban HSI ($r=6$). From left to right, top to bottom:
(i)~trees, (ii)~roof tops I, (iii)~grass, (iv)~road, (v)~roof tops II, (vi)~dirt.
\label{fig:Urban_abmap}}
    \end{center}
\end{figure*}

}

\opt{preprint}{
\begin{figure*}[p]
\begin{center}
\includegraphics[height=0.3\textheight]{Urbanend}\\
\includegraphics[height=0.3\textheight]{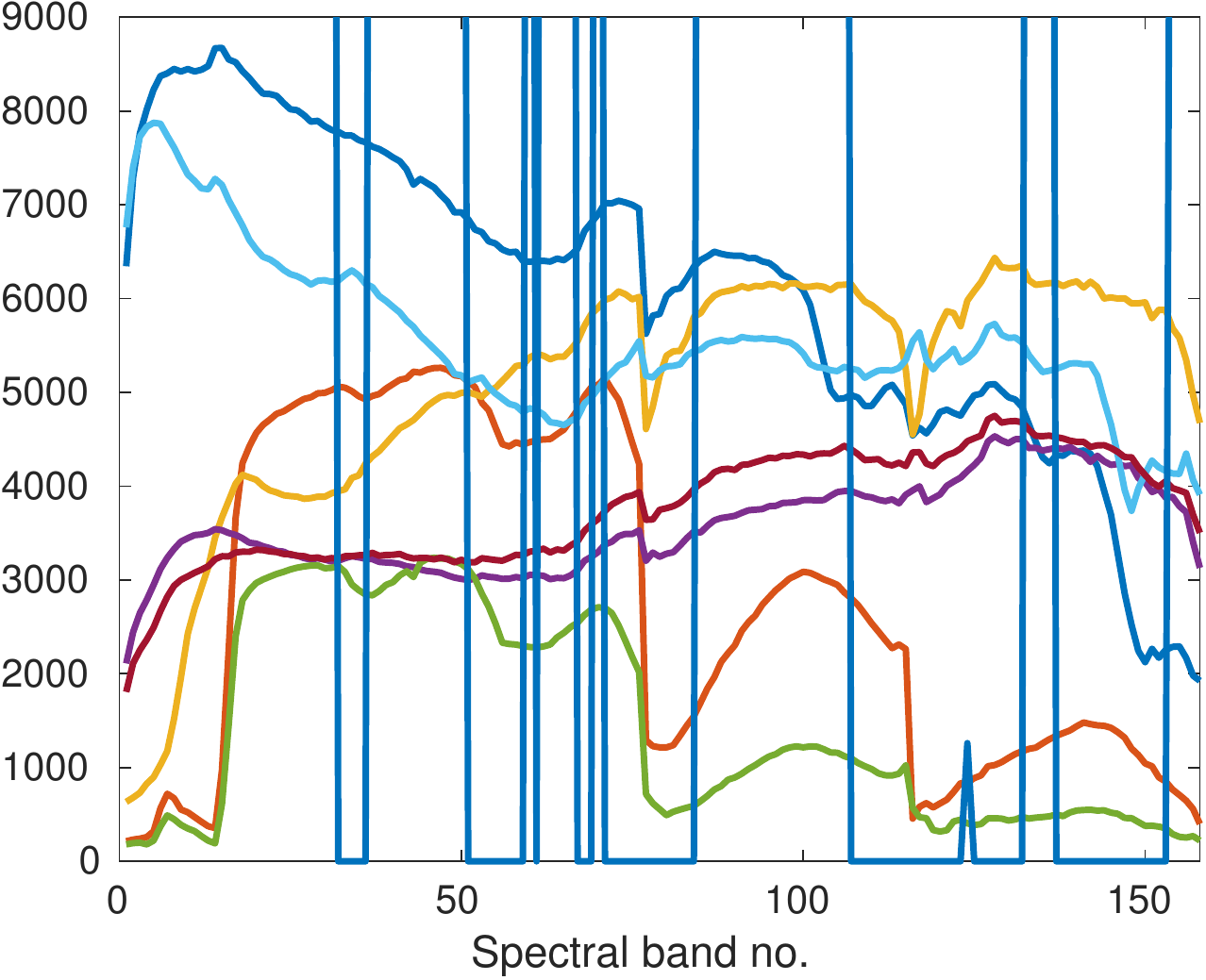}\\
\includegraphics[height=0.3\textheight]{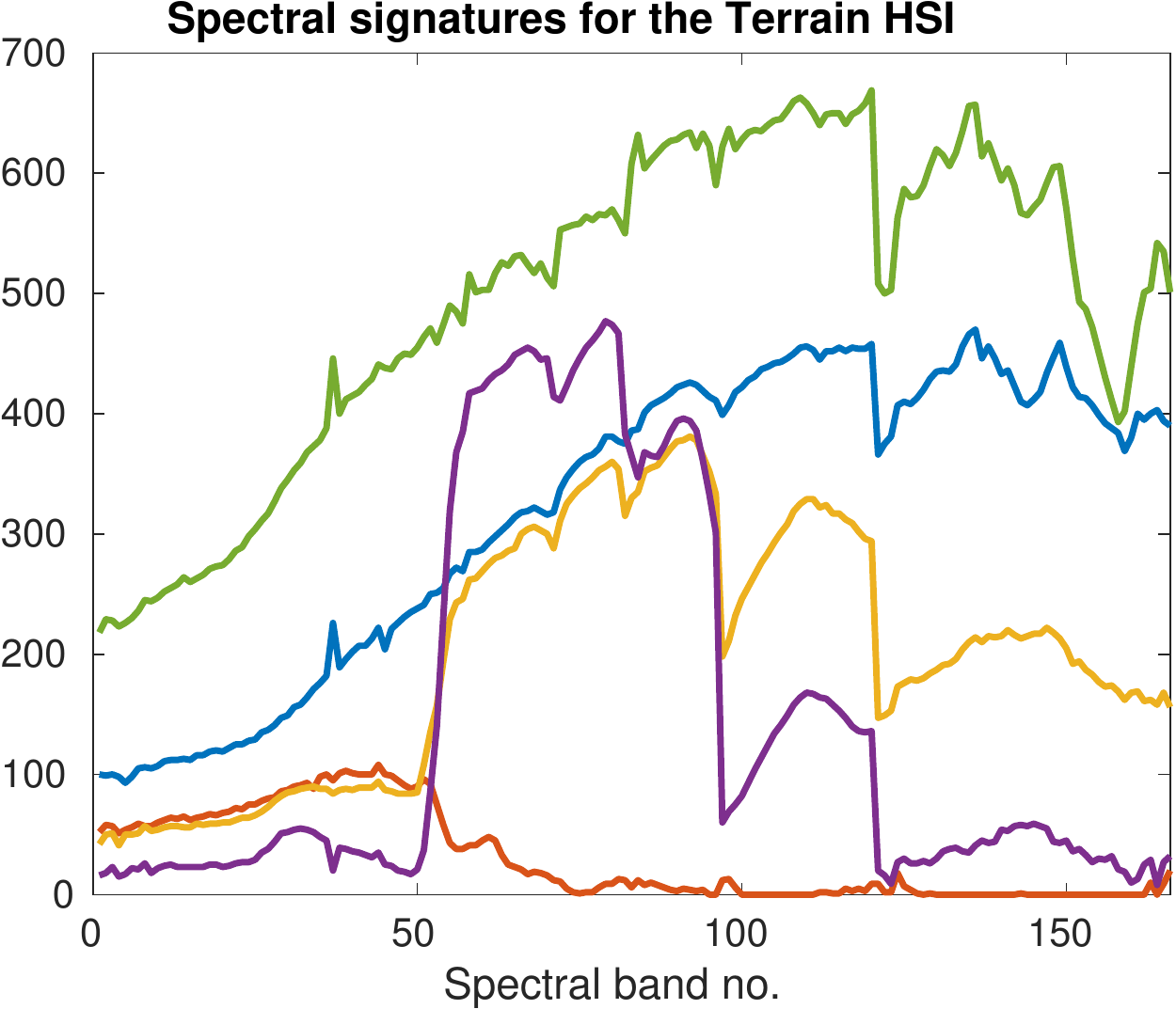}
\caption{Endmembers extracted by FGNSR. From top to bottom:
Urban HSI (FGNSR-500, $r=6$),
San Diego airport HSI (FGNSR-100, $r=8$), and
Terrain HSI (FGNSR-100, $r=5$).   The endmembers of the San Diego
    airport data set include one outlier, which corresponds to the
    truncated, dark blue signature.
\label{fig:endmembers}}
\end{center}
\end{figure*}

Figure~\ref{fig:endmembers} displays the endmembers extracted by FGNSR
for the three HSI's.  We observe that the extracted endmembers are
well separated. Note that for the San Diego airport data set, one
outlier is extracted. The reason is that the norm of its spectral
signature is extremely large (with values up to 32753) hence it has to
be extracted to reduce the error to a low value.

\begin{figure*}[p]
\includegraphics[width=\textwidth]{Urbanabmap}
\caption{Abundance maps corresponding to the endmembers extracted by FGNSR-500 for the Urban HSI ($r=6$). From left to right, top to bottom:
(i)~trees, (ii)~roof tops I, (iii)~grass, (iv)~road, (v)~roof tops II, (vi)~dirt.
\label{fig:Urbanabmap}}
\end{figure*}

\begin{figure*}[p]
\includegraphics[width=\textwidth]{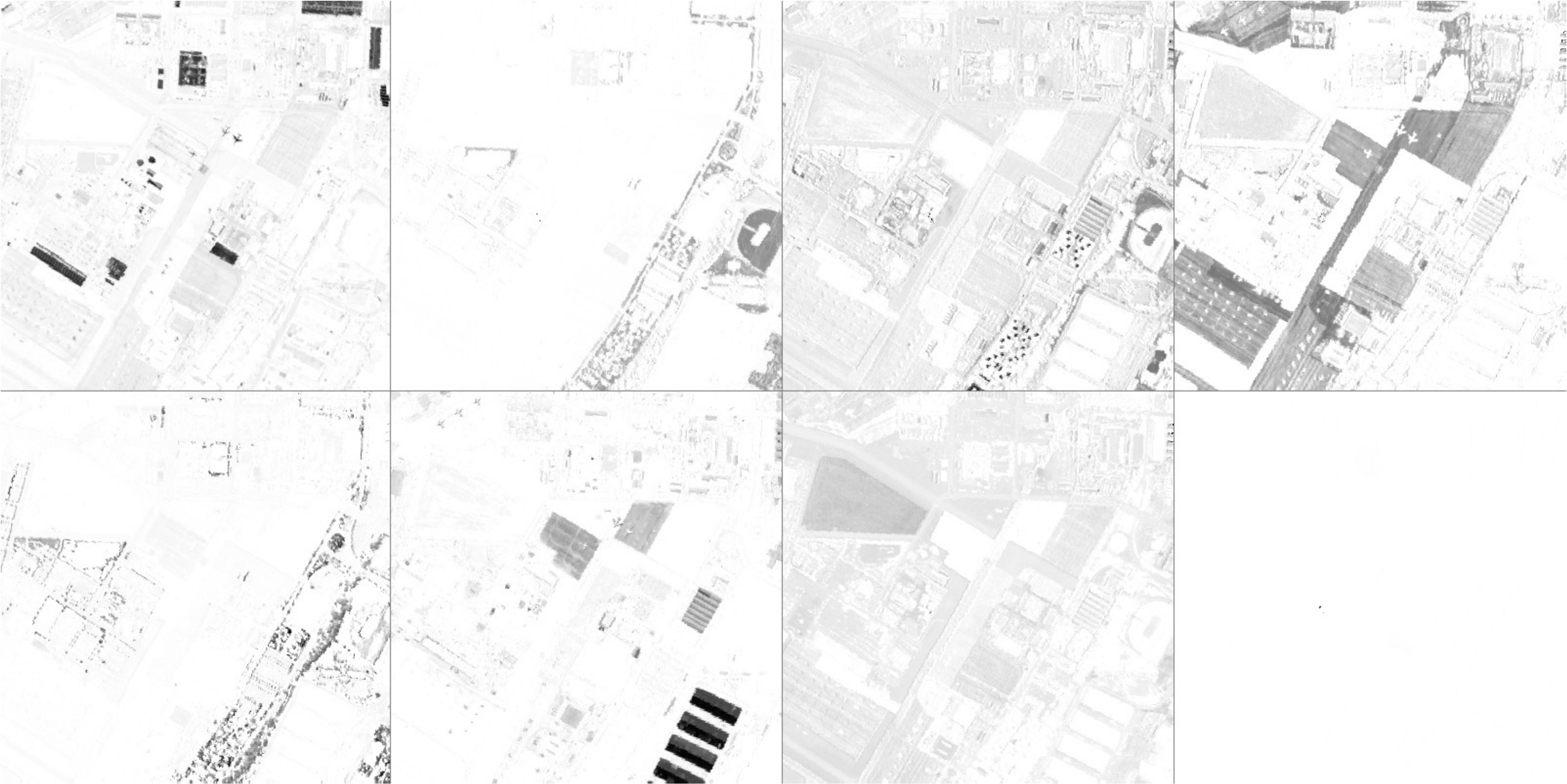}
\caption{Abundance maps corresponding to the endmembers extracted by FGNSR-100 for the San Diego airport HSI ($r=8$).
From left to right, top to bottom:
(i)~roof tops I, (ii)~grass, (iii)~dirt, (iv)~road surface I, (v)~trees, (vi)~roof tops II, (vii)~road surface I, (viii)~outlier.
\label{fig:SanDiegoabmap}}
\end{figure*}

\begin{figure*}[t]
\includegraphics[width=\textwidth]{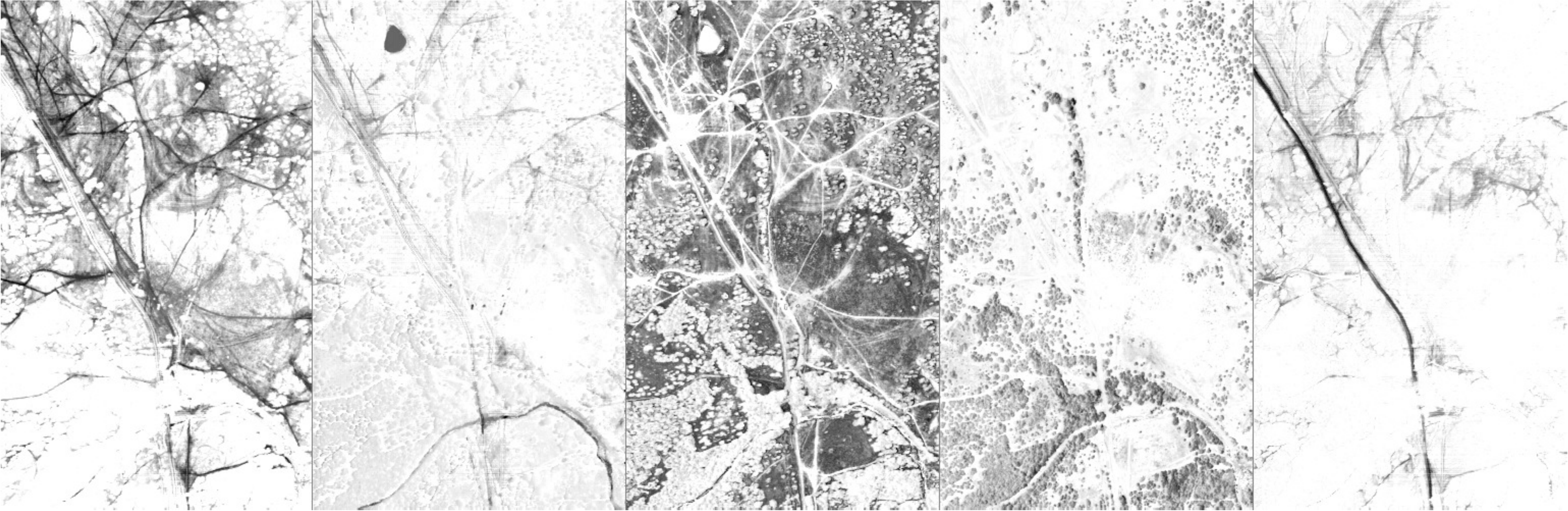}
\caption{Abundance maps corresponding to the endmembers extracted by FGNSR-100 for the Terrain HSI ($r=7$).
From left to right, top to bottom:
(i)~roads I, (ii)~roads II, (iii)~grass, (iv)~trees, (v)~roads III.
\label{fig:Terrainabmap}}
\end{figure*}

Figures~\ref{fig:Urbanabmap},~\ref{fig:SanDiegoabmap},
and~\ref{fig:Terrainabmap} display the abundance maps corresponding to
FGNSR. We observe that the abundace maps are relatively well separated
and sparse, which confirms the quality of the endmembers extracted by
FGNSR (the only constraint imposed on the weights $H$ is
nonnegativity).

}  % End of preprint only part

  %\subsubsection{San Diego Airport HSI}
  %; see Figure~\ref{sandiego}.
%\begin{figure*}[ht!]
%\begin{center}
%%\includegraphics[width=7cm]{SanDiego.eps}
%\caption{San Diego airport.}
%\label{sandiego}
%\end{center}
%\end{figure*}

%For this data set, FGNSR-100 performs slightly better than FGNSR-500, and, again the other near-separable NMF algorithms perform better with the subsampled data set with 100 pixels.

%\subsubsection{Terrain HSI}

%As for the previous data set, FGNSR-100 performs slightly better than FGNSR-500, and the other near-separable NMF algorithms perform better with the subsampled data set with 100 pixels.

To conclude, we have observed on three data sets that FGNSR-100 and FGNSR-500 are able to identify the best subset of columns to reconstruct the original input image in all cases, while its computational cost is reasonable.

		\begin{remark}
	Note that we also compared the algorithms on the widely used Cuprite data set but the results are not very interesting as most algorithms find very similar solutions in terms of relative error.
	The reason is that the data is not contaminated with outliers and spectral signatures are rather similar in  that data set.
	For example, for $r = 15$, all algorithms have relative error in the interval [1.39,1.99]\%, and, for $r = 20$, in the interval [1.35,1.84]\%.
	\end{remark} 

	%Why this makes sense? Centroids associated with very large clusters will be selected (definitely not outliers, and allow to reduce the error greatly). Others, which are well balanced, will be selected according to their ability to reduce the error.
	%How many iterations? Here it is 1000. Do we get the same result with 500?

  \section{Conclusion and Further Work}

	In this paper, we analyzed a robust convex optimization model for dealing with nonnegative sparse regression with self dictionary; in particular showing its close connection with the model proposed in~\cite{EMO12}. We then developed an optimal first-order method to solve the problem. 	We showed that this approach outperforms standard near-separable NMF algorithms on synthetic data sets, and on real-world HSI's when  combined with a hierarchical clustering strategy. Moreover, we observed that preselecting a small number of good candidate allows all near-separable NMF algorithms to perform much better.

The model~\eqref{GLqp2} (and the corresponding
Algorithm~\ref{alg:fgnsr}) can be easily generalized to handle any
dictionary $D$, changing the model to
\begin{align*}
    \min_{X \in \Omega'} \; \trace(X) & \quad
        \text{such that}  \quad \norm{M - DX}_F \leq \epsilon,
\end{align*}
where
\begin{equation*}
    \Omega' = \{ X \in [0,1]^{n,n} \setcond
        X_{ij} \norm{D(:,i)}_1 \leq X_{ii} \norm{M(:,j)}_1  \forall i,j \}.
\end{equation*}
It would therefore be an interesting direction of further research to
analyze and apply this model in other contexts.

Further it would be of interest to study the robustness of
Algorithm~\ref{alg:fgnsr} if combined with random projections
(see~\cite{ding2013topic,benson2014scalable}), and to consider
non-additive noise models, e.g., spectral variability in the context of
HSI~\cite{zare2014endmember}.

\opt{ieee}{\appendices}
\opt{preprint}{\begin{appendix}}

\section{Proof of Theorem~\ref{thm:equiv}}
\label{sec:proof_equiv}
%Let us prove that any optimal solution of \eqref{eq:esser} can be transformed into an optimal solution of \eqref{GLmod} while staying feasible for \eqref{eq:esser} hence proving our claim.
Observe that
\begin{itemize}

    \item Any feasible solution of~\eqref{GLmod} is a feasible
        solution of~\eqref{eq:esser}: in fact, the only difference
        between the feasible domains are the additional constraints
        $X_{ij} \leq X_{ii}$ for all $i, j$.

        % Note that these constraints are equivalent to $\max_{j}
        % X_{ij} \leq X_{ii}$ for all $i$.

    \item The objective function of~\eqref{eq:esser} is larger than the
        one of \eqref{GLmod}: in fact, by definition, $\trace(X) \leq
        ||X||_{1,\infty}$. Moreover, the two objective functions
        coincide if and only if $X_{ii} = \max_{j} X_{ij}$ for all
        $i$.

\end{itemize}

These observations imply that the optimal objective function value
of~\eqref{GLmod} is larger than the one of~\eqref{eq:esser} (since
any feasible solution $X$ of~\eqref{GLmod} is feasible
for~\eqref{eq:esser} and satisfies $\trace(X) = \norm{X}_{1,\infty}$).

Therefore, if we can transform any optimal solution $X^*$
of~\eqref{eq:esser} into a feasible solution $X^{\dagger}$
of~\eqref{GLmod} with the same objective function value,
$X^{\dagger}$ will be an optimal solution of~\eqref{GLmod} and the
proof will be complete.

Let  $X^*$ be any optimal solution of~\eqref{eq:esser}.  If $X^* = 0$,
then $X^*$ is trivially feasible for~\eqref{GLmod} and the proof
is complete.

So assume $X^* \neq 0$.  We will show by contradiction that
$\norm{M-MX^*} = \epsilon$, so assume that $\norm{M - MX^*} <
\epsilon$.  By continuity of norms there exists $0 < \delta < 1$
so that $\norm{M - M (\delta X^*)}  < \epsilon$. The matrix $\delta
X^*$ is a feasible solution for \eqref{eq:esser} since $0 \leq \delta X^* \leq
X^* \leq 1$ while $\norm{\delta X^*}_{1,\infty} = \delta
\norm{X^*}_{1,\infty} < \norm{X^*}_{1,\infty}$, a contradiction to the
optimality of $X^*$.

Assume that $X^*_{ii} < X^*_{ij} \leq 1$ for some $j$. Let us show
this is only possible if $M(:,i) = M X^*(:,i)$: assume $M(:,i)
\neq M X^*(:,i)$, we have
\begin{equation*}
    M(:,i) - M X^*(:,i) = (1 - X^*_{ii})  M(:,i)  - M X^*(\mathcal{I},i),
\end{equation*}
where $\mathcal{I} = \{1,\dotsc,n\} \setminus \{i\}$.  Increasing
$X^*_{ii}$ to $X^*_{ij}$ while decreasing the entries of
$X^*(\mathcal{I},i)$ by the factor $\beta =
\frac{X^*_{ii}}{X^*_{ij}} < 1$ decreases $\norm{M(:,i) - M X(:,i)}_c$
by a factor $(1 - X^*_{ii})$ which would be a contradiction since
$\norm{M-MX}$ would be reduced (see above).

Finally, let us construct another optimal solution $X^{\dagger}$: we
take $X^{\dagger} = X^*$, and for all $j$ such that $X^*_{ii} <
X^*_{ij} \leq 1$, $X^{\dagger}_{ii}$ is replaced with
$X^{\dagger}_{ij}$ and $X^{\dagger}(\mathcal{I},i)$ is multiplied
by the factor $\beta = \frac{X^*_{ii}}{X^*_{ij}} < 1$.  The error
$\norm{M-MX}$ remains unchanged while the objective function might
have only decreased: $X^{\dagger}$ is an optimal solution of
\eqref{eq:esser} satisfying $X^{\dagger}_{ii} = \max_j
X^{\dagger}_{ij}$ hence is also an optimal solution of
\eqref{GLmod}.

%Note that in that case no entry of $X(\mathcal{I},i)$ can be a strict minimum in its row otherwise $X_{\beta}$ would be strivtly better. Therefore, this procedure makes any feasible solution of \eqref{eq:esser} better hence \eqref{GLmod} is a better model.

\section{Projection onto $\Omega$}
\label{sec:projection}

We now give the details for evalutating the Euclidean projection onto
the set~$\Omega$, see Section~\ref{sec:proj}. Note first that it is
sufficient to consider the problem of projecting  a single row of $X$,
say, the first one, on the set
\begin{equation*}
    \Omega_1 \defby \{ z \in \Rnn^n \setcond
        z_1 \le 1, w_1 x_j \le w_j z_1 \},
\end{equation*}
since the the rows of $X$ can be projected individually on $\Omega$ as
they do not depend on each other in $\Omega$.  Further we may assume
that $w > 0$ because $w_1 = 0$ removes all constraints on $x_j$ for $2
\le j \le n$, and $w_j$ for $j \neq 1$ fixes $x_j = 0$ for any point
in $\Omega_1$.  Similarly we can assume w.l.o.g.\@ that $x_j > 0$ for $2
\le j \le n$, since $x_j \le 0$ fixes $z_j = 0$ for any point in
$\Omega_1$ and does not affect the choice of the first coordinate.
Note that the norm we wish to minimize now is given by the standard
Euclidean norm $\norm{x} = (\sum_j x_j^2)^{\frac{1}{2}}$.

Let $t \in \R$ be a parameter and denote by $\phi_t : \R^n
\rightarrow \R^n$ the mapping
\begin{equation}
\label{eqn:phi_t}
    \phi_t(x)_j \defby
    \begin{cases}
        t & \text{if}\; j=1,\\
        \tfrac{w_j}{w_1} t & \text{if $j\neq 1$ and} \;
            \tfrac{w_1}{w_j} x_j \ge t,\\
        x_j & \text{else.}
    \end{cases}
\end{equation}
From the definition we see that for all $x \in \R^n$ and all $t \in
[0,1]$ we have that $\phi_t(x) \in \Omega_1$.

Unfortunately we cannot simply assume that $0 \le x_1 \le 1$.
Treating the cases $x_1 < 0$, $0 \le x_1 \le 1$ and $1 < x_1$
homogeneously puts a burden on the notation and slightly obfuscates the
arguments used in the following.  We set $x_1^+ \defby \min \{1, \max
\{ 0, x_1 \} \}$ and $x_1^- \defby \min \{0, x_1 \}$.

\begin{lemma}
\label{lem:univariate}
Let $x \in \R^n$, then
\begin{equation*}
    \min_{z \in \Omega_1} \norm{x - z} =
    \min_{t \in [x_1^+, 1]} \norm{x - \phi_t(x)}.
\end{equation*}
In particular, if $z^* = \argmin_{z \in \Omega_1} \norm{x -z}$, we have for
$2 \le j \le n$ that
\begin{align*}
    w_1 z^*_j & < w_j z^*_1 \quad \text{if} \; w_1 x_j < w_j z^*_1\\
\intertext{and}
    w_1 z^*_j & = w_j z^*_1 \quad \text{if} \; w_1 x_j \ge w_j z^*_1.
\end{align*}
\end{lemma}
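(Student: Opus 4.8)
The plan is to reduce the $n$-dimensional projection onto $\Omega_1$ to a one-dimensional search over the first coordinate $t = z_1$, and to show that once $t$ is fixed, the optimal choice of the remaining coordinates is exactly $\phi_t(x)$. First I would fix a candidate value $t \in [0,1]$ for the first coordinate and minimize $\norm{x - z}^2 = (x_1 - t)^2 + \sum_{j \ge 2}(x_j - z_j)^2$ over all $z$ with $z_1 = t$ and $0 \le z_j \le \frac{w_j}{w_1} t$. Since the constraints decouple across $j \ge 2$, this is $n-1$ independent one-dimensional box-constrained least-squares problems, whose solution is the clamp $z_j = \min\{x_j, \tfrac{w_j}{w_1} t\}$ (the lower bound $0$ is inactive because we reduced to the case $x_j > 0$); this is precisely $\phi_t(x)_j$. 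Hence $\min_{z \in \Omega_1,\, z_1 = t} \norm{x-z} = \norm{x - \phi_t(x)}$, and minimizing over $z \in \Omega_1$ amounts to minimizing $g(t) \defby \norm{x - \phi_t(x)}^2$ over $t \in [0,1]$.

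Next I would argue that the search interval can be shrunk from $[0,1]$ to $[x_1^+, 1]$. The point is that $g(t)$ is nonincreasing on $[0, x_1^+]$: for $t \le x_1 \le 1$ the first term $(x_1 - t)^2$ strictly decreases as $t$ increases toward $x_1^+$, while each clamped term $(x_j - \min\{x_j, \tfrac{w_j}{w_1}t\})^2$ is nonincreasing in $t$ (the threshold $\tfrac{w_j}{w_1}t$ only rises toward $x_j$). In the case $x_1 < 0$ we have $x_1^+ = 0$ and there is nothing to shrink; in the case $x_1 > 1$ we have $x_1^+ = 1$ and the interval $[x_1^+,1]$ is the single point $t = 1$, which is consistent since $z_1 \le 1$ forces $t = 1$ to be optimal there (increasing $t$ toward $x_1$ is blocked). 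This handles the three cases uniformly and establishes the displayed equality.

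Finally, for the complementarity statement about $z^*$: writing $t^* = z^*_1$, optimality of $z^*$ together with the first part gives $z^*_j = \phi_{t^*}(x)_j = \min\{x_j, \tfrac{w_j}{w_1} t^*\}$ for $j \ge 2$, equivalently $w_1 z^*_j = \min\{w_1 x_j, w_j t^*\}$. If $w_1 x_j < w_j t^* = w_j z^*_1$ then $w_1 z^*_j = w_1 x_j < w_j z^*_1$, the strict inequality; if $w_1 x_j \ge w_j z^*_1$ then $w_1 z^*_j = w_j t^* = w_j z^*_1$, the equality. The main obstacle I anticipate is not any single step but the careful bookkeeping around $x_1 \notin [0,1]$: I must make sure the monotonicity argument on $[0, x_1^+]$ and the reduction $x_j > 0$ are genuinely without loss of generality, in particular that coordinates with $x_j \le 0$ (forced to $z_j = 0$) and the clamping at the upper end $t = 1$ do not interact with the choice of $t^*$. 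I would isolate these as the ``w.l.o.g.'' reductions already made before the lemma statement and invoke them explicitly, so that the core argument runs cleanly on the reduced instance where $w > 0$ and $x_j > 0$ for $j \ge 2$.
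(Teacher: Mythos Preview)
Your proof is correct and follows essentially the same strategy as the paper: parametrize by $t=z_1$, observe that for fixed $t$ the remaining coordinates decouple and the partial minimizer is exactly $\phi_t(x)$, and then restrict the one-dimensional search from $[0,1]$ to $[x_1^+,1]$. The only cosmetic difference is in this last step: the paper shows $z_1^*\ge x_1^+$ by a contradiction argument (replace $z_1^*$ with $x_1$, keep the other coordinates, and note the result is still in $\Omega_1$ with strictly smaller cost), whereas you argue directly via monotonicity of $t\mapsto\norm{x-\phi_t(x)}^2$ on $[0,x_1^+]$; the two arguments are equivalent.
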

\begin{proof}
Let $z^* = \argmin_{z \in \Omega_1}$.  We will show first that
\begin{equation*}
    \min_{z \in \Omega_1} \norm{x - z} =
    \min_{t \in [0, 1]} \norm{x - \phi_t(x)}.
\end{equation*}

If $w_1 x_j < w_j z^*_1$, it follows $z^*_j = x_j$,
because only in this case the cost contribution of the $j$-th
coordinate is minimum (zero).  Otherwise, if $w_1 x_j \ge w_j z^*_1$,
the projection cost from the $j$-th coordinate is minimized only if
$x_j = \tfrac{w_j}{w_1} z^*_1$, because all the quantities involved
are positive.  It follows that $z^*_j = \phi_{z^*_1}(x)_j$ for $2 \le
j \le n$. And since $z^*_1 \in [0,1]$, it follows that $z^* =
\argmin_{t \in [0,1]} \norm{x - \phi_t(x)}$.

To finish the proof, we will now show that $z_1^* \ge x_1^+$, which is
trivial for the case $x_1 < 0$. In the case $x_1 \in [0,1]$, we show
$z_1^* \ge x_1$.  In order to obtain a contradiction, assume $z_1^* <
x_1$, and define $\tilde{z} \in \Omega_1$ by
\begin{equation*}
    \tilde{z}_j =
    \begin{cases}
        x_1   & \text{if $j = 1$}\\
        z^*_j & \text{if $j \neq 1$}.
    \end{cases}
\end{equation*}
We compute
\begin{equation*}
\begin{split}
    \norm{x - \tilde{z}}_2
    & = \sum_{j=1}^n (x_j - \tilde{z}_j)^2
    = \sum_{j=2}^n (x_j - z^*_j)^2
    < \sum_{j=1}^n (x_j - z^*_j)^2\\
    & = \norm{x - z^*}_2,
\end{split}
\end{equation*}
which contradicts the optimality of $z^*$.

In the case $x_1 > 1$, a similar reasoning shows that $z_1^* \ge 1$
(in fact $z_1^* = 1$).
\end{proof}

The previous lemma shows that the optimal projection can be computed
by minimization of a univariate function.  We will show next that this
can be done quite efficient.  For a given $x \in \R^n$ we define the
function
\begin{equation*}
   c_x  : [x_1^-, \infty [ \rightarrow \R, \quad
   t  \mapsto \norm{x - \phi_t(x)}^2.
\end{equation*}
By Lemma~\ref{lem:univariate}, the (squared) minimum projection cost
is given by the minimum of $c_x\restrict_{[x_1^+,1]}$, and our next
step is to understand the behavior of $c_x$ (see
Fig.~\ref{fig:euclproj}).

In order to simplify the notation in the following, we abbreviate $b_j
\defby \frac{w_1}{w_j} x_j$
and assume that the components of $x$ are ordered so that
$b_2 \le b_3 \le \dotsb \le b_n$, and since $x_j > 0$ for $2 \le j
\le n$, we have in fact that
\begin{equation}
\label{eqn:ordering}
    x_1^- \bydef b_1 \le 0 < b_2 \le b_3 \le \dotsb \le b_n.
\end{equation}

\begin{lemma} Let $x \in \R^n$.
\begin{compactenum}[(i)]
\item \label{case:convex}
The function $c_x$ is a piecewise $C^\infty$ function with $C^1$
break points $b_j$, and each piece  $c_x \restrict_{[b_k, b_{k+1}]}$
is strongly convex.  In particular, $c_x$ is strongly convex and
attains its minimum.

\item \label{case:pullback}
Let $t^* = \argmin_{t \in [x_1^-, \infty [} c_x(t)$, the optimal
projection of $x$ on $\Omega_1$ is
\begin{equation}
\label{eqn:pullback}
    \argmin_{z \in \Omega_1} \norm{x - z} =
    \begin{cases}
        \phi_{x_1^+}(x) & \text{if $t^* < x_1^+$},\\
        \phi_{t^*}(x)   & \text{if $t^* \in [x_1^+,1]$ or}\\
        \phi_{1}(x  )   & \text{if $1 < t^*$}.
    \end{cases}
\end{equation}
\end{compactenum}
\end{lemma}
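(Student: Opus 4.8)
The plan is to derive an explicit formula for $c_x$ and read both parts off it. Using the definition~\eqref{eqn:phi_t} of $\phi_t$ together with $x_j = \tfrac{w_j}{w_1} b_j$, one obtains for all $t \ge x_1^-$ that
\begin{equation*}
    c_x(t) = (x_1 - t)^2 + \sum_{j=2}^{n} \frac{w_j^2}{w_1^2}
        \bigl( \max\{ b_j - t,\, 0 \} \bigr)^2 ,
\end{equation*}
since the $j$-th coordinate of $x - \phi_t(x)$ equals $\tfrac{w_j}{w_1}(b_j - t)$ when $b_j \ge t$ and vanishes otherwise. With the ordering~\eqref{eqn:ordering}, on an interval $[b_k, b_{k+1}]$ the active set $\{ j \ge 2 : b_j \ge t \}$ is the fixed set $\{ k+1, \dots, n \}$, so there $c_x$ is a quadratic polynomial with leading coefficient $1 + \sum_{j=k+1}^{n} w_j^2/w_1^2 \ge 1$; this already gives that each piece $c_x\restrict_{[b_k,b_{k+1}]}$ is $C^\infty$ and strongly convex with modulus at least $2$.

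For the remaining assertions in~(i) I would argue from the displayed formula rather than by gluing pieces. Each summand $t \mapsto (\max\{ b_j - t, 0\})^2$ is a convex $C^1$ function whose value and derivative both vanish at $t = b_j$; hence $c_x$ is the sum of the strongly convex function $(x_1-t)^2$ and $n-1$ convex $C^1$ functions, so it is $C^1$ — in particular genuinely $C^1$, not merely continuous, at the break points $b_j$ — strongly convex, and, being coercive on $[x_1^-, \infty[$ because $(x_1-t)^2 \to \infty$, it attains its minimum at a unique point $t^*$.

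For part~(ii) I would combine Lemma~\ref{lem:univariate} with the convexity just established. That lemma reduces the projection to $\argmin_{z \in \Omega_1} \norm{x - z} = \phi_{\hat t}(x)$ with $\hat t = \argmin_{t \in [x_1^+, 1]} c_x(t)$. Since $x_1^- \le x_1^+$ by construction, the interval $[x_1^+, 1]$ lies inside $[x_1^-, \infty[$, on which $c_x$ is strongly convex, so its constrained minimizer over $[x_1^+,1]$ is the clamp of the unconstrained one: $\hat t = \max\{ x_1^+, \min\{ 1, t^* \} \}$. Inserting the three cases $t^* < x_1^+$, $t^* \in [x_1^+,1]$ and $t^* > 1$ into $\phi_{\hat t}(x)$ gives exactly~\eqref{eqn:pullback}.

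The computations above are elementary, and I expect the only genuine care to be needed in two routine places: checking that $c_x$ is $C^1$ (and not merely continuous) across the break points — handled by the observation that the summand switched on or off at $b_j$ vanishes to second order there — and the bookkeeping of which summands are active on each subinterval, which the $\max\{\cdot, 0\}$ form makes transparent and which also dispenses with separate treatment of degenerate coordinates (coinciding $b_j$, or $b_1 = x_1^- \le 0$).
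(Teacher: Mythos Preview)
Your proof is correct and follows essentially the same approach as the paper's: compute $c_x$ explicitly as a sum over active coordinates, read off piecewise smoothness, $C^1$ continuity at the break points, and strong convexity, then invoke Lemma~\ref{lem:univariate} together with convexity to get~\eqref{eqn:pullback}. Your use of the $\max\{b_j - t,0\}^2$ form and the explicit clamping argument for part~(ii) make the $C^1$ property and the case distinction slightly more transparent than in the paper, but the underlying argument is the same.
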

\begin{proof}
We show~(\ref{case:convex}) first.  Using the
definition~\eqref{eqn:phi_t}, we compute for $x \in \R^n$ and $t
\in [b_1, \infty[$
\begin{gather*}
    c_x(t)
    = \norm{x - \phi_t(x)}^2
    = (x_1 - t)^2 + \sum_{j \in B(t)} (x_j - \tfrac{w_j}{w_1}t)^2,
\end{gather*}
where
\begin{equation*}
    B(t) \defby \{ 1 \le j \le n \setcond t \le b_j \},
\end{equation*}
from which we see that $c_x$ is piecewise smooth and continuously
differentiable at the points $b_j$.  For a point $t \in ]b_k,
b_{k+1}[$, we see that $c_x''(t) > 0$ which shows strong convexity on
each piece.  But $c_x'$ is continuous at each $b_j$, so $c_x$ is
strongly convex on all of $[b_1, \infty[$.  Finally, since $\lim_{t
\rightarrow \infty} c_x(t) = \lim_{t \rightarrow \infty} (x_1 - t)^2 =
\infty$, we see see that $c_x$ attains its minimum.

The assertions in~(\ref{case:pullback}) follow directly from
Lemma~\ref{lem:univariate} and the convexity of~$c_x$.
\end{proof}

Since $c_x$ attains its minimum at $t^* \in [b_1, \infty[$, we have
that either $t^* \in [b_1, b_2]$, $t^* \in ]b_k, b_{k+1}]$, for some
$2 \le k < n$, or $t^* \in ]b_n, \infty[$.  By the
definition of $\phi_t$, the constraints $w_1 x_j \le w_j x_1$
corresponding to break points $b_j \ge t^*$ are ``active'' at the
point $\phi_{t^*}(x)$, while all other constraints are not active.
More formally, in each of the cases in~\eqref{eqn:B_star}, we can
uniquely associate a set of optimal active indices $B^* \subseteq \{2,
\dotsc, n\}$ as follows (here $2 \le k \le n-1$):
\begin{equation}
\label{eqn:B_star}
    B^* =
    \begin{cases}
        B_1 \defby \{ 2, \dotsc, n \}
            & \text{iff $t^* \in [b_1, b_2] \bydef T_1$}\\
        B_k \defby \{ k+1, \dotsc, n \}
            & \text{iff $t^* \in ]b_k, b_{k+1}] \bydef T_k$}\\
        B_n \defby \emptyset
            & \text{iff $t^* \in ]b_n, \infty[ \bydef T_n$}.
    \end{cases}
\end{equation}

The preceding observation yields an efficient ``dual'' algorithm for
minimizing $c_x$ over $[b_1, \infty[$, which we develop in the
following lemma.

\begin{lemma}
Let $x \in \R^n$, and $t^* = \argmin_{t \in [b1, \infty[} \norm{x -
\phi_t(x)}$,
denote the set of active indices corresponding to $t^*$ by $B^* = \{1
\le j \le n \setcond b_j \ge t^* \}$, and set
\begin{equation}
\label{eqn:t_formula}
    t_k = w_1 \frac{w_1 x_1 + \sum_{j \in B_k} w_j x_j}
         {w_1^2 + \sum_{j \in B_k} w_j^2},
    \quad 1 \le k \le n.
\end{equation}
If $t_k \in T_k$, then $t^* = t_k$ (and $B^* = B_k$).
\end{lemma}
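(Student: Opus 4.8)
The plan is to leverage the structure of $c_x$ established in the previous lemma. On each interval $T_k$ the active set $B(t)$ is constant and equal to $B_k$, so $c_x$ restricted to $T_k$ coincides with the single quadratic polynomial
\[
  q_k(t) \defby (x_1-t)^2 + \sum_{j\in B_k}\Bigl(x_j - \tfrac{w_j}{w_1}\,t\Bigr)^2,
\]
which is strongly convex. The first step is the elementary computation that $t_k$ from \eqref{eqn:t_formula} is precisely the (unconstrained) minimizer of $q_k$: setting $q_k'(t)=0$, multiplying through by $w_1^2$, and collecting the coefficient of $t$ gives $t\bigl(w_1^2 + \sum_{j\in B_k}w_j^2\bigr) = w_1^2 x_1 + w_1\sum_{j\in B_k}w_j x_j$, i.e.\ $t=t_k$.

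Next, suppose $t_k\in T_k = \,]b_k,b_{k+1}]$. I want to show $t_k$ is a global minimizer of $c_x$ on $[b_1,\infty[$. If $t_k$ lies in the open interval $]b_k,b_{k+1}[$, then $c_x$ agrees with $q_k$ on a neighbourhood of $t_k$, so $c_x'(t_k)=q_k'(t_k)=0$. If instead $t_k=b_{k+1}$, this is a break point of $c_x$; here I would use the fact (from the previous lemma) that $c_x$ is $C^1$ at each $b_j$, which gives $c_x'(t_k)=q_k'(b_{k+1})=q_k'(t_k)=0$ as well. (When $k=1$ and $t_1=b_1$, read $c_x'$ as the right derivative at the left endpoint of the domain; the conclusion still holds.) In every case $c_x'(t_k)=0$, and since $c_x$ is convex, any such stationary point is a global minimizer; strong convexity makes it the unique one, so $t^*=t_k$. (Recall $t^*$ is well defined since the previous lemma shows $c_x$ attains its minimum.)

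It remains to identify $B^*$. Since $t^*=t_k$ satisfies $b_k<t^*\le b_{k+1}$ and the ordering \eqref{eqn:ordering} reads $b_1\le 0<b_2\le\dots\le b_n$, we get $b_j\le b_k<t^*$ for all $j\le k$ and $b_j\ge b_{k+1}\ge t^*$ for all $j\ge k+1$. Hence $B^*=\{\,j : b_j\ge t^*\,\}=\{k+1,\dots,n\}=B_k$, completing the argument.

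The derivative calculation and the convexity deduction are both routine; the only step needing care is the boundary subcase $t_k=b_{k+1}$, where piecewise smoothness of $c_x$ alone does not force $t_k$ to be stationary for $c_x$ — one genuinely needs the $C^1$-across-break-points property of $c_x$ from the preceding lemma. One could alternatively avoid derivatives by comparing the vertices of neighbouring pieces $q_{k-1},q_k,q_{k+1}$ directly, but routing through global convexity is shorter.
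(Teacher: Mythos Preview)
Your proof is correct and follows essentially the same approach as the paper: compute $t_k$ as the unique minimizer of the quadratic piece $q_k$ by solving $q_k'(t)=0$, then use global (strong) convexity of $c_x$ to conclude that a stationary point lying in $T_k$ must be the global minimizer $t^*$, whence $B^*=B_k$ via the partition \eqref{eqn:B_star}. The paper's version is terser and leaves implicit exactly the step you spell out---that $t_k\in T_k$ forces $c_x'(t_k)=0$---while you rightly note that at the boundary value $t_k=b_{k+1}$ this relies on the $C^1$ continuity across break points established in the preceding lemma.
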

\begin{proof}
Let $1 \le k \le n$.  In order to minimize the strongly convex
function
\begin{equation*}
    c_x(t) = (x_1 - t)^2 +
      \sum_{j \in B_k} (x_j - \tfrac{w_j}{w_1} t)^2,
\end{equation*}
one simply solves $c_x'(t) = 0$ for $t$, and obtains
expression~\eqref{eqn:t_formula}.  So $t_k$ is the minimum of $c_x$
under the hypothesis that $B_k$ is the correct guess for $B^*$.  But
by~\eqref{eqn:B_star} we have that $B_k = B^*$ if, and only if, $t_k
\in T_k$, which gives a trivially verifiable criterion for deciding
whether the guess for $B^*$ is correct.
\end{proof}

\begin{figure}
\begin{center}
\includegraphics[width=.48\textwidth]{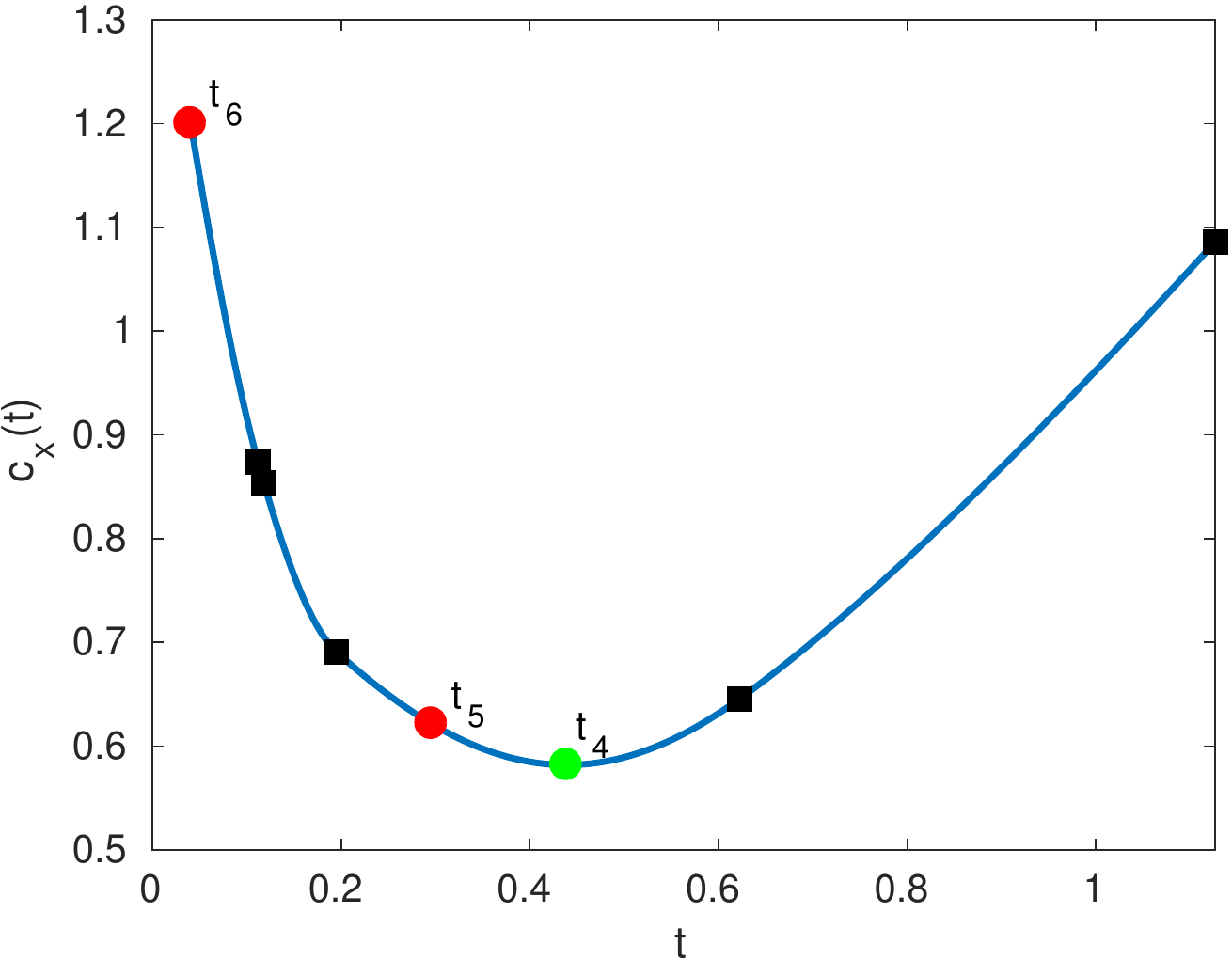}
\end{center}
\caption{Example for the minimization of $c_x$ ($n=6$).  Black squares
indicate the break points and the discs show the location of the trial
values $t_k$ from~\eqref{eqn:t_formula}.  The optimal projection is
realized by $t_4$ and the set of optimal active indices are the two
break points to the right of $t_4$.\label{fig:euclproj}}
\end{figure}

The algorithmic implication of this lemma is as follows.  Since we
know that one of the sets $B_1, \dotsc, B_n$ must be the optimal
active set $B^*$, we simply compute for each such set $B_k$ the
corresponding optimal point from~\eqref{eqn:t_formula} until we
encounter a point $t_k \in T_k$, which then is the sought optimum.

\begin{theorem}
The Euclidean projection of $X \in R^{n,n}$ on $\Omega$ can be
computed in $\bigo{n^2 \log n}$.
\end{theorem}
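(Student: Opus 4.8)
The plan is to assemble the three lemmas above and add a short accounting of the arithmetic cost. First observe that the defining inequalities of $\Omega$ in~\eqref{eqn:omega} couple only entries lying in one and the same row of $X$: the constraint $w_i X_{ij} \le w_j X_{ii}$ involves nothing but row $i$. Hence the Euclidean projection $\mathcal{P}_\Omega$ decomposes into $n$ independent projections, one per row, and each of these is (after the harmless reductions and the coordinate permutation described before Lemma~\ref{lem:univariate}) a problem of the form $\min_{z \in \Omega_1}\norm{x-z}$. It therefore suffices to show that one such row projection can be carried out in $\bigo{n\log n}$ operations; projecting all $n$ rows then costs $\bigo{n^2\log n}$, which is the claim.

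So fix a row $x \in \R^n$. The preprocessing (ensuring $w>0$ and $x_j>0$ for $j\ge 2$) costs $\bigo{n}$. By Lemma~\ref{lem:univariate} together with the subsequent lemma, the optimal projection equals $\phi_{t^*}(x)$ with the end-point clipping of~\eqref{eqn:pullback}, where $t^* = \argmin_{t\in[b_1,\infty[} c_x(t)$; since $\phi_t$ is given by the explicit formula~\eqref{eqn:phi_t}, forming this vector once $t^*$ is known costs another $\bigo{n}$. Thus the entire difficulty lies in computing $t^*$.

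To find $t^*$, first sort the break points $b_j = \tfrac{w_1}{w_j} x_j$, $2\le j\le n$, so that the ordering~\eqref{eqn:ordering} holds; this single sort is the only step costing $\bigo{n\log n}$. By the last lemma, $t^* = t_k$ for the (unique) index $k$ with $t_k \in T_k$, where $t_k$ is the closed-form expression~\eqref{eqn:t_formula} and the intervals $T_k$ are as in~\eqref{eqn:B_star}; uniqueness holds because the $T_k$ are pairwise disjoint and cover $[b_1,\infty[$, while existence of a valid $k$ is exactly the content of the lemma (at the true active set $B^* = B_{k}$, solving $c_x'(t)=0$ returns a minimizer lying in $T_{k}$). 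The point to exploit is that the sets $B_1 \supseteq B_2 \supseteq \dotsb \supseteq B_n = \emptyset$ are nested and any two consecutive ones differ by exactly one index, so the numerator $w_1 x_1 + \sum_{j\in B_k} w_j x_j$ and the denominator $w_1^2 + \sum_{j\in B_k} w_j^2$ in~\eqref{eqn:t_formula} can be maintained incrementally: starting from $k=n$ and decreasing $k$, each step adds one term to each sum, so computing the new $t_k$ and testing $t_k \in T_k$ costs $\bigo{1}$. Scanning $k$ until the valid index is hit therefore costs $\bigo{n}$ in total.

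Collecting the pieces, one row of $X$ is projected in $\bigo{n} + \bigo{n\log n} + \bigo{n} + \bigo{n} = \bigo{n\log n}$ operations, and all $n$ rows in $\bigo{n^2\log n}$. The only genuinely super-linear ingredient per row is the sort producing~\eqref{eqn:ordering}. The step I expect to require the most care is precisely the incremental bookkeeping that keeps the search over $k$ at $\bigo{n}$: evaluating the sums in~\eqref{eqn:t_formula} from scratch for each $k$ would raise the per-row cost to $\bigo{n^2}$ and the overall cost to $\bigo{n^3}$, destroying the bound. A secondary nuisance is dispatching the end-point cases $t^* < x_1^+$ and $t^* > 1$ of~\eqref{eqn:pullback} without performing any additional sorting, but these are handled by a constant number of comparisons once $t^*$ is in hand.
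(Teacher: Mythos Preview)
Your proposal is correct and follows essentially the same approach as the paper: decompose the projection row by row, sort the break points $b_j$ once per row in $\bigo{n\log n}$, and then scan the nested sets $B_n,B_{n-1},\dotsc,B_1$ while updating the numerator and denominator of~\eqref{eqn:t_formula} in $\bigo{1}$ per step, so that the only super-linear cost per row is the sort. The paper's proof is a terser statement of exactly this argument.
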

\begin{proof}
The projection cost for each row of $X$ amounts to
evaluating~\eqref{eqn:t_formula} for each of the sets $B_k$ until the
optimal set of active indices is identified.  If the sets $B_k$ are
processed in the ordering $B_n, B_{n-1}, \dotsc, B_1$, the nominator
and denominator in~\eqref{eqn:t_formula} can be updated from one set to
the next, resulting in a computation linear in $n$.  The only
non-linear cost per row is induced by sorting the break points of
$c_x$ as in~\eqref{eqn:ordering}, which can be done in $\bigo{n\log n}$, and
results in the stated worst-case complexity bound.
\end{proof}

The function $c_x$ is shown in Fig.~\ref{fig:euclproj} for a randomly
chosen vector $x$ and weights $w$.  In this example, the sets $B_6,
B_5$ and $B_4$ are tested for optimality; the corresponding values
$t_k$ are indicated in the plot.

\begin{remark} \label{rem:datastruct} In an implementation of the
    outlined algorithm it is not necessary to sort \emph{all} the
    break points of $c_x$ for a row $x$ of $X$ as
    in~\eqref{eqn:ordering}.  Only the break points $b_j \in [x_1^+,
    1]$ need to be considered, as all other break points are either
    never (if $b_j < x_1^+$) or always (if $b_j > 1$) in the optimal
    set $B^*$ of active indices.  Denote $k_1$ the number of break
    points in $[x_1^+,1]$ and $k_2 \defby \card{B^*}$.  If the indices
    are not sorted but maintained on a heap~(see,
    e.g.,~\cite{Tarjan1983}), the cost overhead for sorting is reduced
    to $\bigo{k_2 \log k_1}$.  Hence the overall complexity for
    projecting a single row is $\bigo{n + k_2 \log k_1}$.  The
    resulting algorithm is sketched in Algorithm~\ref{alg:projection}.
    Asymptotically it still has the worst case complexity of $\bigo{n
    \log n}$ as we may need to extract all $n$ possible elements from
    $h$, but it should run much faster in practice.
\end{remark}

\begin{algorithm}
\caption{Euclidean projection on $\Omega_1$} \label{alg:projection}
\begin{algorithmic}[1]
\REQUIRE $x \in \R^n$ with $x_2,\dotsc,x_n > 0$, $0 < w \in \R^n$,
heap data structure $h$
\ENSURE $z \in \Omega_1$ with $\norm{x - z}_2$ minimal
\STATE $x_1^+ \leftarrow \max \{ 0, x_1 \}$
\STATE $\mathcal{B} \leftarrow \{ 2 \le j \le n \setcond
    x_1^+ \le \tfrac{w_1}{w_j} x_j \le 1 \}$
\STATE $\mathcal{B}^* \leftarrow \{2 \le j \le n \setcond
    \tfrac{w_1}{w_j} x_j > 1 \}$
\STATE $p \leftarrow w_1 x_1 + \sum_{j \in \mathcal{B}^*} w_j x_j$
\STATE $q \leftarrow w_1^2 + \sum_{j \in \mathcal{B}^*} w_j^2$
\STATE $t \leftarrow w_1 \tfrac{p}{q}$
\STATE initheap($h$, $\{\tfrac{w_1}{w_j}x_j \setcond j \in
    \mathcal{B}\}$) \emph{\% Operation linear in $\card{\mathcal{B}}$.}
    %\COMMENT{Note: operation linear in $\card{\mathcal{B}}$}
\WHILE{$h \neq \emptyset$ and $t < $ findmin($h$)}
    \STATE $M \leftarrow$ extractmin($h$)
    \STATE Let $2 \le j \le n$ such that $M$
        corresponds to $\tfrac{w_1}{w_j}x_j$
    \STATE $\mathcal{B}^* \leftarrow \mathcal{B}^* \cup \{j\}$
    \STATE $p \leftarrow p + w_j x_j$
    \STATE $q \leftarrow q + w_j^2$
    \STATE $t \leftarrow w_1 \tfrac{p}{q}$
\ENDWHILE
\STATE $z \leftarrow x$
\STATE $z_1 \leftarrow \min\{ 1, \max\{0, t\} \}$
    \COMMENT{See~\eqref{eqn:pullback}}
\FORALL{$j \in \mathcal{B}^*$}
    \STATE $z_j \leftarrow z_1 \tfrac{w_j}{w_1}$
        \COMMENT{See~\eqref{eqn:phi_t}}
\ENDFOR
\end{algorithmic}
\end{algorithm}

\opt{preprint}{
    \end{appendix}
}

\opt{preprint}{
\bibliographystyle{siamplain}
}
\opt{ieee}{
\bibliographystyle{IEEEtran}
}
\bibliography{IEEEabrv,FastGradSepNMF}

\end{document}